	\titlespacing{\section}{0pt}{12pt}{0pt}
	\titlespacing{\subsection}{0pt}{6pt}{0pt}
\theoremstyle{plain}
\newtheorem{theorem}{Theorem}
\newtheorem{proposition}[theorem]{Proposition}
\newtheorem{corollary}[theorem]{Corollary}
\newtheorem{lemma}[theorem]{Lemma}
\newtheorem*{conjecture*}{Conjecture}
\theoremstyle{definition}
\newtheorem{definition}[theorem]{Definition}
\newtheorem{example}[theorem]{Example}
\theoremstyle{remark}
\newtheorem{remark}[theorem]{Remark}
\definecolor{linkred}{rgb}{0.75,0,0}
\definecolor{linkblue}{rgb}{0,0,1}
\newcommand\blfootnote[1]{
  \begingroup
  \renewcommand\thefootnote{}\footnote{#1}
  \addtocounter{footnote}{-1}
  \endgroup
}
\setlist{nolistsep}
\newcommand{\dd}{\mathrm{d}}
\newcommand{\h}{\hbar}
\newcommand{\z}{\overline{z}}
\renewcommand{\H}{\vec{H}}
\renewcommand{\P}{\vec{P}}
\newcommand{\mmu}{\boldsymbol{\mu}}
\DeclareRobustCommand{\stirling}{\genfrac\{\}{0pt}{}}
\newcommand{\dashrule}[1][black]{
  \color{#1}\rule[\dimexpr.5ex-.2pt]{4pt}{.4pt}\xleaders\hbox{\rule{4pt}{0pt}\rule[\dimexpr.5ex-.2pt]{4pt}{.4pt}}\hfill\kern0pt
}
\newcommand{\rulecolor}[1]{
  \def\CT@arc@{\color{#1}}
}
\begin{document}

{\large \bfseries Topological recursion and a quantum curve for monotone Hurwitz numbers}

{\bfseries Norman Do, Alastair Dyer, and Daniel V. Mathews}

\emph{Abstract.} Classical Hurwitz numbers count branched covers of the Riemann sphere with prescribed ramification data, or equivalently, factorisations in the symmetric group with prescribed cycle structure data. Monotone Hurwitz numbers restrict the enumeration by imposing a further monotonicity condition on such factorisations. In this paper, we prove that monotone Hurwitz numbers arise from the topological recursion of Eynard and Orantin applied to a particular spectral curve. We furthermore derive a quantum curve for monotone Hurwitz numbers. These results extend the collection of enumerative problems known to be governed by the paradigm of topological recursion and quantum curves, as well as the list of analogues between monotone Hurwitz numbers and their classical counterparts.
\blfootnote{\emph{2010 Mathematics Subject Classification:} 05A15, 14N10, 14H30, 81S10. \\
\emph{Date:} 18 August 2014 \\ The first author was supported by the Australian Research Council grant DE130100650.}

~

\hrule

\setlength{\parskip}{1pt}
\tableofcontents
\setlength{\parskip}{6pt}

\section{Introduction} \label{sec:intro}

The simple Hurwitz number $H_{g,n}(\mmu)$ counts genus $g$ branched covers of $\mathbb{CP}^1$ with simple ramification, except for the ramification profile $\mmu = (\mu_1, \mu_2, \ldots, \mu_n)$ over $\infty$. The monodromy of such a branched cover associates a transposition in the symmetric group $S_{\lvert \mmu \rvert}$ to each simple branch point, where $\lvert \mmu \rvert = \mu_1 + \mu_2 + \cdots + \mu_n$. The product of these transpositions is necessarily the inverse of the monodromy permutation associated to $\infty$, which has cycle type $\mmu$. Conversely, the Riemann existence theorem guarantees that any such factorisation in the symmetric group corresponds to a branched cover of $\mathbb{CP}^1$ with the desired ramification data.

The monotone Hurwitz number $\H_{g,n}(\mmu)$ counts those factorisations in the symmetric group enumerated by $H_{g,n}(\mmu)$ that satisfy an additional constraint. We require the transpositions $(a_1 ~ b_1), (a_2 ~ b_2), \ldots, (a_m ~ b_m)$ of the factorisation to satisfy the property of monotonicity --- namely, that when the transpositions are written with the convention that $a_i < b_i$, then $b_1 \leq b_2 \leq \cdots \leq b_m$. The monotone Hurwitz numbers first appeared in a series of papers by Goulden, Guay--Paquet and Novak, in which they arose as coefficients in the large $N$ asymptotic expansion of the Harish-Chandra--Itzykson--Zuber (HCIZ) matrix integral over the unitary group $U(N)$~\cite{gou-gua-nov13a, gou-gua-nov13b, gou-gua-nov14a}. The monotonicity condition is also natural from the standpoint of the Jucys--Murphy elements in the symmetric group algebra $\mathbb{C}[S_{|\mmu|}]$.

In this paper, we prove that the monotone Hurwitz numbers are governed by the topological recursion of Eynard and Orantin. The topological recursion was inspired by the loop equations from the theory of matrix models~\cite{eyn-ora07a}. The topological recursion takes as input the data of a spectral curve, which we take to be a Torelli marked compact Riemann surface $\mathcal C$ endowed with two meromorphic functions $x$ and $y$. The output is an infinite family of meromorphic multidifferentials $\omega_{g,n}$ on ${\mathcal C}^n$ for integers $g \geq 0$ and $n \geq 1$, which we refer to as correlation differentials.

The coefficients of certain series expansions of correlation differentials are often solutions to problems in enumerative geometry and mathematical physics. In this way, topological recursion governs intersection theory on moduli spaces of curves~\cite{eyn-ora07a}, Weil--Petersson volumes of moduli spaces of hyperbolic surfaces~\cite{eyn-ora07b}, enumeration of ribbon graphs~\cite{nor13, dum-mul-saf-sor}, stationary Gromov--Witten theory of $\mathbb{P}^1$~\cite{nor-sco, dun-ora-sha-spi}, simple Hurwitz numbers and their generalisations~\cite{bou-mar, eyn-mul-saf, do-lei-nor, bou-her-liu-mul}, and Gromov--Witten theory of toric Calabi--Yau threefolds~\cite{bou-kle-mar-pas, eyn-ora12, fan-liu-zon}. There are also conjectural relations to spin Hurwitz numbers~\cite{mul-sha-spi} and quantum invariants of knots~\cite{dij-fuj-man, bor-eyn}. These myriad applications raise questions such as the following: How universal is the scope of topological recursion? What is the commonality among problems governed by topological recursion?

In order to state our result relating monotone Hurwitz numbers to topological recursion, define the following generating functions, known as \emph{free energies}.
\[
F_{g,n}(z_1, z_2, \ldots, z_n) = \sum_{\mu_1, \mu_2, \ldots, \mu_n = 1}^\infty \H_{g,n}(\mu_1, \mu_2, \ldots, \mu_n)~x_1^{\mu_1} x_2^{\mu_2} \cdots x_n^{\mu_n}
\]
Here and throughout the paper, we use the shorthand $x_i$ to denote $x(z_i)$. We will show in Corollary~\ref{cor:fenergies} that the free energy $F_{g,n}$ is a meromorphic function on the $n$-fold Cartesian product ${\mathcal C}^n$ of the spectral curve. This fact is used to prove the following result, which had been previously suggested by Goulden, Guay--Paquet and Novak~\cite{gou-gua-nov13a}.

\begin{theorem} \label{thm:toprec}
The topological recursion applied to the spectral curve $\mathbb{CP}^1$ with functions $x, y: \mathbb{CP}^1 \to \mathbb{C}$ given by
\[
x(z) = \frac{z-1}{z^2} \qquad \text{and} \qquad y(z) = -z
\]
produces correlation differentials whose expansions at $x_i = 0$ satisfy
\[
\omega_{g,n}(z_1, z_2, \ldots, z_n) = \frac{\partial}{\partial x_1} \frac{\partial}{\partial x_2} \cdots \frac{\partial}{\partial x_n} F_{g,n}(z_1, z_2, \ldots, z_n)~\dd x_1 \otimes \dd x_2 \otimes \cdots \otimes \dd x_n, \qquad \text{for $(g,n) \neq (0,2)$}.
\]
\end{theorem}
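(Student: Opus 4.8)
The plan is to verify the topological recursion by matching it, order by order in $(g,n)$, with a combinatorial recursion for the monotone Hurwitz numbers. Before anything else I would record the geometry of the spectral curve. From $\dd x = \frac{2-z}{z^3}\,\dd z$ one sees that $x$ has a single zero of its differential at $z=2$, the unique ramification point relevant to the recursion, with branch point $x=\tfrac14$. The two sheets of $x$ near this point are exchanged by the involution $\sigma(z)=\frac{z}{z-1}$, which fixes $z=2$ and satisfies $x\circ\sigma=x$. The branch of $x^{-1}$ along which the free energies are expanded near $x=0$ is $z=\frac{1-\sqrt{1-4x}}{2x}$, the Catalan generating function, so that power series in $x$ correspond to germs of meromorphic functions near $z=1$. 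Finally, on $\mathbb{CP}^1$ the relevant Bergman kernel is $\omega_{0,2}=\frac{\dd z_1\,\dd z_2}{(z_1-z_2)^2}$, and the recursion kernel is $K(z_0,z)=\frac{1}{2\,(y(z)-y(\sigma(z)))}\frac{\int_{\sigma(z)}^{z}\omega_{0,2}(z_0,\cdot)}{\dd x(z)}$.

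The combinatorial input is a cut-and-join type recursion for $\H_{g,n}(\mmu)$, obtained by deleting the transposition carrying the largest label from a monotone factorisation and analysing whether it joins two cycles or splits one. I would first transcribe this recursion as an identity among the free energies $F_{g,n}$, and then, invoking Corollary~\ref{cor:fenergies} that each $F_{g,n}$ is meromorphic on ${\mathcal C}^n$, promote it to an identity among the candidate differentials $\tilde\omega_{g,n} := \frac{\partial}{\partial x_1}\cdots\frac{\partial}{\partial x_n}F_{g,n}\,\dd x_1\otimes\cdots\otimes\dd x_n$. The point of passing to the curve is that an equality of power series valid near $x_i=0$ between two globally meromorphic objects on ${\mathcal C}^n$ is automatically an equality of differentials everywhere, so the combinatorial recursion becomes a genuine identity of meromorphic multidifferentials.

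With the recursion in this differential form, the comparison with topological recursion is a residue computation concentrated at $z=2$. Writing $I=\{2,\ldots,n\}$ and $z_I=(z_2,\ldots,z_n)$, I would show that the differential-form recursion for $\tilde\omega_{g,n}$ is equivalent to the loop equations at the ramification point: that the symmetric combination $\tilde\omega_{g,n}(z,z_I)+\tilde\omega_{g,n}(\sigma(z),z_I)$ is holomorphic at $z=2$, and that the quadratic combination $\tilde\omega_{g-1,n+1}(z,\sigma(z),z_I)+{\sum}'\,\tilde\omega_{g_1,|I_1|+1}(z,z_{I_1})\,\tilde\omega_{g_2,|I_2|+1}(\sigma(z),z_{I_2})$, summed over stable splittings $g_1+g_2=g$, $I_1\sqcup I_2=I$, vanishes to second order there, together with the pole structure (poles only at $z=2$, of bounded order, and no residues). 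The reconstruction criterion for topological recursion then forces $\tilde\omega_{g,n}=\omega_{g,n}$ for all $(g,n)$ with $2g-2+n>0$, while the case $(0,1)$ is verified by hand and $(0,2)$ is genuinely excluded, the Bergman kernel differing from $\frac{\partial^2}{\partial x_1\,\partial x_2}F_{0,2}\,\dd x_1\otimes\dd x_2$ by the double pole along the diagonal. Equivalently, one may substitute the differential recursion directly into the kernel integral and deform contours, checking that the only surviving residue is the one prescribed at $z=2$.

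The main obstacle is this matching step: converting the bookkeeping of the monotone cut-and-join, which tracks the largest label and the cycle it acts on, into the precise local data at $z=2$, and in particular verifying that the splitting terms of the recursion assemble exactly into the quadratic differential appearing in the kernel. Controlling the principal parts of the $F_{g,n}$ near $z=2$ and confirming the absence of spurious residues is where the real work lies. By comparison, the low-complexity base cases $\omega_{0,3}$ and $\omega_{1,1}$, which must be computed directly both to seed the induction and to confirm the compatibility of the chosen data $x=\frac{z-1}{z^2}$ and $y=-z$, and the upgrade from formal power series to global differentials, are comparatively routine once meromorphicity is in hand.
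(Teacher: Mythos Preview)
Your proposal is correct and follows essentially the same route as the paper: start from the monotone cut-and-join recursion, pass to the generating-function/differential level, exploit the polynomiality result (Corollary~\ref{cor:fenergies}) to know that the candidate differentials $\tilde\omega_{g,n}$ are global with poles only at $z_i=2$, use the involution $\sigma(z)=\tfrac{z}{z-1}$ to control the symmetric behaviour, handle the base cases $(0,1),(0,3),(1,1)$ directly, and treat $(0,2)$ separately via the diagonal correction.

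The only notable difference is one of packaging. You phrase the matching step as verifying the linear and quadratic loop equations at $z=2$ and then invoking a general reconstruction criterion, whereas the paper proceeds more concretely: it adds the cut-and-join equation to its pullback under $\sigma$, uses the key fact that $F_{g,n}(z_1,z_S)+F_{g,n}(\sigma(z_1),z_S)$ is independent of $z_1$ (their Lemma~\ref{lem:sympart}, which is exactly your linear loop equation), divides through, and then takes the principal part at $z_1=2$ to land directly on the kernel-residue form of the topological recursion. Your abstract loop-equation formulation is cleaner and more portable to other problems; the paper's hands-on symmetrisation and principal-part extraction makes the mechanism of how the cut-and-join terms reorganise into the quadratic differential completely explicit, and in particular shows transparently why the ``join'' term $\frac{x_1x_i}{x_1-x_i}[\cdots]$ drops out upon taking principal parts (it is killed by the double zero of $\dd x$ at $z=2$). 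Either way the substantive work is the same, and your identification of Lemma~\ref{lem:sympart}-type control of principal parts as the crux is accurate.
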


It was posited by Gukov and Su{\l}kowski that spectral curves $A(x, y) = 0$ satisfying a certain K-theoretic criterion may be quantised to produce a non-commutative \emph{quantum curve} $\widehat{A}(\widehat{x}, \widehat{y})$. One can interpret $\widehat{A}(\widehat{x}, \widehat{y})$ as a differential operator via $\widehat{x} = x$ and $\widehat{y} = -\h \frac{\partial}{\partial x}$, and it is natural to consider the following Schr\"{o}dinger-like equation~\cite{guk-sul}.
\[
\widehat{A}(\widehat{x}, \widehat{y}) \, Z(x, \h) = 0
\]
Gukov and Su{\l}kowski conjecture that the solution $Z(x, \h)$ possesses a perturbative expansion that can be calculated from the spectral curve via the topological recursion. Conversely, they suggest that the Schr\"{o}dinger-like equation may be used to recover the quantisation $\widehat{A}(\widehat{x}, \widehat{y})$. Quantum curves have been rigorously shown to exist in the sense of Gukov and Su{\l}kowski for several problems, including intersection theory on moduli spaces of curves~\cite{zho12a}, enumeration of ribbon graphs~\cite{mul-sul}, simple Hurwitz numbers and their generalisations~\cite{mul-sha-spi}, and open string invariants for $\mathbb{C}^3$ and the resolved conifold~\cite{zho12b}.

The \emph{wave function} $Z(x, \h)$ --- also referred to as the \emph{partition function} --- assembles the free energies into a generating function in the following way.
\begin{equation} \label{eq:pfunction}
Z(x, \hbar) = \exp \left[ \sum_{g=0}^\infty \sum_{n=1}^\infty \frac{\hbar^{2g-2+n}}{n!} ~ F_{g,n}(z, z, \ldots, z) \right]
\end{equation}

\begin{theorem} \label{thm:qcurve}
The wave function $Z(x, \hbar)$ satisfies the following equation, where $\widehat{x} = x$ and $\widehat{y} = -\hbar \frac{\partial}{\partial x}$.
\[
[\widehat{x} \widehat{y}^2 + \widehat{y} + 1] \, Z(x, \hbar) = 0
\]
\end{theorem}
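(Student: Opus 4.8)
The plan is to convert the operator equation into a single scalar ODE for $S = \log Z$ and then verify that ODE order by order in $\hbar$, with the higher orders supplied by the recursion underlying Theorem~\ref{thm:toprec}. Since $\widehat{x}=x$ acts by multiplication and $\widehat{y}=-\hbar\,\partial_x$, we have $\widehat{x}\widehat{y}^2+\widehat{y}+1 = \hbar^2 x\,\partial_x^2 - \hbar\,\partial_x + 1$. Writing $Z=e^{S}$ and using $\partial_x Z = (\partial_x S)Z$ and $\partial_x^2 Z = \big(\partial_x^2 S + (\partial_x S)^2\big)Z$, the equation $[\widehat{x}\widehat{y}^2+\widehat{y}+1]Z=0$ becomes, after dividing by $Z$,
\[
\hbar^2 x\big(\partial_x^2 S + (\partial_x S)^2\big) - \hbar\,\partial_x S + 1 = 0 .
\]
It therefore suffices to establish this identity, where $S=\sum_{g,n}\frac{\hbar^{2g-2+n}}{n!}F_{g,n}(z,\dots,z)$ and, by symmetry of the free energies, $\partial_x F_{g,n}(z,\dots,z) = n\,(\partial_{x_1}F_{g,n})(z,\dots,z)$.

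I would then expand in powers of $\hbar$. Since the lowest term of $S$ is $\hbar^{-1}F_{0,1}(z)$, the prefactor $\hbar^2$ places the genus-zero one-point data at the leading order $\hbar^0$, where the identity collapses to $x(\partial_x F_{0,1})^2 - \partial_x F_{0,1} + 1 = 0$. A direct computation of the genus-zero one-point numbers gives $\partial_x F_{0,1} = z$, the branch of the spectral curve analytic at $x=0$ (equivalently $z = \tfrac{1}{2x}(1-\sqrt{1-4x})$, normalised by $\H_{0,1}(1)=1$), and this $z$ satisfies $xz^2 - z + 1 = 0$ by the very definition of the curve. Thus the leading order holds, and with WKB momentum $-\partial_x F_{0,1} = -z = y$ it reproduces exactly the classical spectral curve $x\widehat{y}^2+\widehat{y}+1$.

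For the remaining orders, I would collect the coefficient of $\hbar^{k}$. The term $-\hbar\,\partial_x S$ contributes first derivatives of the $F_{g,n}$, the term $\hbar^2 x\,\partial_x^2 S$ contributes second derivatives, and the quadratic term $\hbar^2 x(\partial_x S)^2$ contributes convolutions $\sum \partial_x F_{g_1,n_1}\,\partial_x F_{g_2,n_2}$ over all splittings with $(2g_1-2+n_1)+(2g_2-2+n_2)=k-2$. Because the $\hbar$-grading tracks the Euler characteristic $2g-2+n$, each such coefficient is precisely the principal specialisation (all $z_i$ set equal, then integrated in $x$) of the loop equations satisfied by the correlation differentials $\omega_{g,n}=\partial_{x_1}\cdots\partial_{x_n}F_{g,n}\,\dd x_1\otimes\cdots\otimes\dd x_n$ of Theorem~\ref{thm:toprec}: the two terms $\omega_{g-1,n+1}$ and $\sum\omega_{g_1,n_1}\omega_{g_2,n_2}$ of the quadratic loop equation specialise to the $\partial_x^2 S$ and $(\partial_x S)^2$ contributions, while the linear-in-$\omega_{g,n}$ term specialises to the $\partial_x S$ contribution, the factors $1/n!$ and the product-rule multiplicities recombining into the correct symmetry weights. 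I would thus invoke this recursion (equivalently, the principally specialised monotone cut-and-join recursion for $\H_{g,n}(\mmu)$) and resum.

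The main obstacle is the interaction of the quadratic term with the exceptional case $(g,n)=(0,2)$. The coupling in $(\partial_x S)^2$ forbids a clean $(g,n)$-by-$(g,n)$ check, so the bookkeeping must be organised by total Euler characteristic; and since Theorem~\ref{thm:toprec} excludes $(0,2)$, the free energy $F_{0,2}$ is not furnished by the recursion and must be inserted by hand. Concretely, the order-$\hbar^1$ identity reads $x\,\partial_x z + (2xz-1)\tfrac12\,\partial_x F_{0,2}(z,z) = 0$, which I would verify directly from the explicit genus-zero two-point generating function (equivalently, from the principal specialisation of the standard correction $\omega_{0,2}=B-\frac{\dd x_1\,\dd x_2}{(x_1-x_2)^2}$). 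Once the base cases $(0,1)$ and $(0,2)$ are pinned down and shown to dovetail, the induction on Euler characteristic closes formally from the loop equations, yielding the quantum curve.
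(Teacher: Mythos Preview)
Your approach is the WKB route---reduce to a Riccati-type equation for $S=\log Z$ and verify it order by order in $\hbar$ from the connected recursion---and it is genuinely different from what the paper does. The paper exploits the exponential $Z=e^{S}$ to pass to \emph{disconnected} monotone factorisations: by the exponential formula one shows $Z=1+\sum_{d\ge1}\sum_{m\ge0}\frac{f(d,m)}{d!}\,x^{d}\hbar^{m-d}$, where $f(d,m)$ counts all (not necessarily transitive) monotone sequences of $m$ transpositions in $S_d$. A one-line combinatorial argument (remove the last transposition containing $d$, or observe there is none) gives the linear recursion $f(d,m)=(d-1)\,f(d,m-1)+f(d-1,m)$, and multiplying by $\tfrac{x^{d}\hbar^{m-d}}{(d-1)!}$ and summing yields $[x\hbar^{2}\partial_x^{2}-\hbar\,\partial_x+1]\,Z=0$ directly. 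The quadratic term $(\partial_xS)^{2}$ in your formulation is absorbed by the exponentiation, so no Euler-characteristic induction and no special handling of $(0,2)$ is required; as a bonus, the proof is independent of Theorem~\ref{thm:toprec}.

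Your route can in principle be pushed through, but as written the central step is only asserted. You claim that the $\hbar^{k}$ coefficient is ``precisely the principal specialisation of the loop equations'', with $\omega_{g-1,n+1}\rightsquigarrow\partial_x^{2}S$ and the product term $\rightsquigarrow(\partial_xS)^{2}$. This identification is incomplete. When all arguments are set equal, $\partial_x^{2}S_{k-2}$ receives contributions $\partial_{x_i}\partial_{x_j}F_{g,n}$ for both $i=j$ and $i\neq j$; on the cut-and-join side these arise from \emph{two} separate places---the $F_{g-1,n+1}$ term and the diagonal limit of the divided-difference ``join'' term $\tfrac{x_1x_i}{x_1-x_i}\big[\partial_{x_1}F_{g,n-1}-\partial_{x_i}F_{g,n-1}\big]$ as $x_i\to x_1$. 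Verifying that these, together with the product terms, assemble with the correct $1/n!$ weights into the single identity $x\,\partial_x^{2}S_{k-2}+x\sum_{j_1+j_2=k-2}(\partial_xS_{j_1})(\partial_xS_{j_2})-\partial_xS_{k-1}=0$ \emph{is} the proof, and you have not carried it out. Your checks at orders $\hbar^{0}$ and $\hbar^{1}$ are correct, but the sentence ``the induction on Euler characteristic closes formally from the loop equations'' hides exactly the computation that needs to be done.
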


Theorem~\ref{thm:toprec} implies that the spectral curve for monotone Hurwitz numbers is given by the equation $A(x, y) = xy^2 + y + 1$, while Theorem~\ref{thm:qcurve} implies that the quantum curve is given by the equation $\widehat{A}(\widehat{x}, \widehat{y}) = \widehat{x} \widehat{y}^2 + \widehat{y} + 1$. These results extend the collection of enumerative problems known to be governed by the paradigm of topological recursion and quantum curves to include monotone Hurwitz numbers. They also provide a new analogue between monotone Hurwitz numbers and their classical counterparts. It is possible to obtain further results by applying the general theory of topological recursion. For example, the asymptotic behaviour of monotone Hurwitz numbers stores intersection numbers on the moduli space $\overline{\mathcal M}_{g,n}$ of stable pointed curves, while the string and dilaton equations imply additional properties of monotone Hurwitz numbers --- see ~Propositions~\ref{pro:coeffs} and~\ref{pro:dilaton}, respectively.

The structure of the paper is as follows.
\begin{itemize}
\item In Section~\ref{sec:monohurwitz}, we state the definition of the monotone Hurwitz numbers, the monotone cut-and-join recursion, and a polynomiality result, all of which appear in the work of Goulden, Guay--Paquet and Novak~\cite{gou-gua-nov13a, gou-gua-nov13b, gou-gua-nov14a}. We furthermore derive some useful consequences.
\item In Section~\ref{sec:toprec}, we briefly review the topological recursion of Eynard and Orantin as well as the related notion of a quantum curve.
\item In Section~\ref{sec:theorem1}, we prove Theorem~\ref{thm:toprec}, which relates the monotone Hurwitz numbers to the topological recursion of Eynard and Orantin applied to a particular rational spectral curve. The result is deduced from the cut-and-join recursion and polynomiality for monotone Hurwitz numbers.
\item In Section~\ref{sec:theorem2}, we explicitly calculate the wave function for the monotone Hurwitz numbers and prove Theorem~\ref{thm:qcurve}.
\item In Section~\ref{sec:applications}, we apply the general theory of topological recursion to relate monotone Hurwitz numbers to intersection numbers on $\overline{\mathcal M}_{g,n}$ as well as to derive string and dilaton equations.
\end{itemize}

\section{Monotone Hurwitz numbers} \label{sec:monohurwitz}

\subsection{Definition and motivation}

Classical Hurwitz theory studies the enumeration of branched covers of Riemann surfaces with prescribed ramification data. As an example, the simple Hurwitz numbers enumerate branched covers of $\mathbb{CP}^1$ with simple ramification except for over $\infty$. More precisely, we have the following definition.

\begin{definition}
The \emph{simple Hurwitz number} $H_{g,n}(\mu_1, \mu_2, \ldots, \mu_n)$ is the weighted count,
up to topological equivalence, of connected genus $g$ branched covers $f: C \to \mathbb{CP}^1$ such that
\begin{itemize}
\item the preimages of $\infty$ are labelled $p_1, p_2, \ldots, p_n$ and the divisor $f^{-1}(\infty)$ is equal to $\mu_1 p_1 + \mu_2 p_2 + \cdots + \mu_n p_n$; and
\item the only other ramification is simple and occurs over $m = 2g - 2 + n + |\mmu|$ fixed points.
\end{itemize}

We consider two branched covers $f_1: C_1 \to \mathbb{CP}^1$ and $f_2: C_2 \to \mathbb{CP}^1$ \emph{topologically equivalent}  if there exists a homeomorphism $\phi: C_1 \to C_2$ such that $f_1 = f_2 \circ \phi$. We count a branched cover $f$ with the weight $\frac{1}{\# \mathop{\mathrm{Aut}} f}$, where $\mathop{\mathrm{Aut}} f$ denotes the group of automorphisms of $f$.
\end{definition}

Throughout this paper, we use the notation $|\mmu| = \mu_1 + \mu_2 + \cdots + \mu_n$ for an $n$-tuple of positive integers $\mmu = (\mu_1, \mu_2, \ldots, \mu_n)$. The equality $m = 2g - 2 + n + |\mmu|$ is a direct consequence of the Riemann--Hurwitz formula.

By considering the monodromy of such a branched cover, one may associate to each branch point with ramification profile $\lambda$ a permutation in $S_{|\mmu|}$ with cycle type $\lambda$. Furthermore, the monodromy permutations away from $\infty$ necessarily multiply 
(when taken in the correct order)
to give the inverse of the monodromy permutation over $\infty$. Conversely, the Riemann existence theorem guarantees that there exists a unique branched cover with the corresponding ramification data, once the location of each branch point is fixed. Therefore, the definition of the simple Hurwitz numbers may be rephrased in terms of factorisations in the symmetric group in the following way.

\begin{proposition}
The simple Hurwitz number $H_{g,n}(\mu_1, \mu_2, \ldots, \mu_n)$ is equal to $\frac{1}{|\mmu|!}$ multiplied by the number of $m$-tuples $(\sigma_1, \sigma_2, \ldots, \sigma_m)$ of transpositions in the symmetric group $S_{|\mmu|}$ such that
\begin{itemize}
\item $m = 2g - 2 + n + |\mmu|$; 
\item the cycles of $\sigma_1 \circ \sigma_2 \circ \cdots \circ \sigma_m$ are labelled $1, 2, \ldots, n$ such that cycle $i$ has length $\mu_i$ for $i = 1, 2, \ldots, n$; and
\item  $\sigma_1, \sigma_2, \ldots, \sigma_m$ generate a transitive subgroup of $S_{|\mmu|}$.
\end{itemize}
\end{proposition}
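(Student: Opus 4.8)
The plan is to realise both sides of the claimed equality through the monodromy representation of a branched cover, and then to account for the factor $\frac{1}{|\mmu|!}$ by an orbit--stabiliser argument. First I would fix the branch points $q_0 = \infty, q_1, \ldots, q_m$ together with a base point $* \in \mathbb{CP}^1$ away from them, choose a standard system of generating loops $\gamma_0, \gamma_1, \ldots, \gamma_m$ for $\pi_1(\mathbb{CP}^1 \setminus \{q_0, \ldots, q_m\}, *)$ subject to the single relation $\gamma_0 \gamma_1 \cdots \gamma_m = 1$, and label the $d = |\mmu|$ points of the fibre $f^{-1}(*)$ by $1, 2, \ldots, d$. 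Lifting loops to paths then yields the monodromy homomorphism $\rho \colon \pi_1(\mathbb{CP}^1 \setminus \{q_0, \ldots, q_m\}, *) \to S_{|\mmu|}$.

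Next I would translate the geometric conditions into the combinatorial ones. Simple ramification over $q_i$ for $i \geq 1$ forces $\sigma_i := \rho(\gamma_i)$ to be a transposition; the ramification profile $\mmu$ over $\infty$ forces $\rho(\gamma_0)$ to have cycle type $\mmu$, and the relation among the generators gives $\rho(\gamma_0) = (\sigma_1 \sigma_2 \cdots \sigma_m)^{-1}$, so that $\sigma_1 \sigma_2 \cdots \sigma_m$ also has cycle type $\mmu$. The labelling of the preimages $p_1, \ldots, p_n$ of $\infty$ corresponds precisely to a labelling of the cycles of this product, with cycle $i$ of length $\mu_i$. Connectedness of $C$ is equivalent to transitivity of the image $\langle \sigma_1, \ldots, \sigma_m \rangle$, while the count $m = 2g - 2 + n + |\mmu|$ is the Riemann--Hurwitz relation already recorded above. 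The Riemann existence theorem supplies the converse: every such tuple arises from a cover, unique up to isomorphism once the branch points are fixed, and two tuples give isomorphic covers exactly when they differ by simultaneous conjugation by an element of $S_{|\mmu|}$, which amounts to relabelling the fibre.

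Finally I would carry out the enumeration. Let $F$ denote the set of $m$-tuples satisfying the three listed conditions, equipped with their cycle labellings. The group $S_{|\mmu|}$ acts on $F$ by simultaneous conjugation, its orbits being in bijection with isomorphism classes of the branched covers counted by $H_{g,n}(\mmu)$. The stabiliser of a tuple consists of those permutations commuting with every $\sigma_i$ and fixing each labelled cycle; under the monodromy dictionary these are exactly the deck transformations preserving the labels $p_1, \ldots, p_n$, that is, the elements of $\mathop{\mathrm{Aut}} f$. By orbit--stabiliser the orbit of a tuple corresponding to $f$ has size $|\mmu|! / \# \mathop{\mathrm{Aut}} f$, so summing over orbits yields $|F| = |\mmu|! \sum_{[f]} \frac{1}{\# \mathop{\mathrm{Aut}} f} = |\mmu|! \, H_{g,n}(\mmu)$, which rearranges to the desired identity.

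The step I expect to demand the most care is matching the automorphism group of the labelled cover with the stabiliser of the labelled factorisation: one must verify that an automorphism fixing each marked preimage $p_i$ of $\infty$ corresponds exactly to a centralising permutation that also preserves the cycle labelling, so that no over- or under-counting of automorphisms occurs. One must also be attentive to the orientation and ordering conventions governing the monodromy, since these determine whether it is $\sigma_1 \sigma_2 \cdots \sigma_m$ or its inverse that carries the profile $\mmu$; as the two have equal cycle type, however, this ambiguity does not affect the final count.
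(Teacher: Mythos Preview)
Your proposal is correct and takes the same approach as the paper, which in fact does not give a formal proof but only sketches the argument---monodromy plus the Riemann existence theorem---in the paragraph immediately preceding the proposition. Your version fleshes out that sketch, supplying in particular the orbit--stabiliser bookkeeping for the factor $\frac{1}{|\mmu|!}$ and the automorphism weights that the paper leaves implicit.
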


The monotone Hurwitz numbers were introduced in a series of papers by Goulden, Guay--Paquet and Novak~\cite{gou-gua-nov13a, gou-gua-nov13b, gou-gua-nov14a}. They are obtained by adding a monotonicity condition to the definition of the simple Hurwitz numbers.

\begin{definition} \label{def:mhurwitz}
The \emph{monotone Hurwitz number} $\vec{H}_{g,n}(\mu_1, \mu_2, \ldots, \mu_n)$ is equal to $\frac{1}{|\mmu|!}$ multiplied by the number of $m$-tuples $(\sigma_1, \sigma_2, \ldots, \sigma_m)$ of transpositions in the symmetric group $S_{|\mmu|}$ such that
\begin{itemize}
\item $m = 2g - 2 + n + |\mmu|$;
\item the cycles of $\sigma_1 \circ \sigma_2 \circ \cdots \circ \sigma_m$ are labelled $1, 2, \ldots, n$ such that cycle $i$ has length $\mu_i$ for $i = 1, 2, \ldots, n$;
\item  $\sigma_1, \sigma_2, \ldots, \sigma_m$ generate a transitive subgroup of $S_{|\mmu|}$; and
\item if we write each transposition $\sigma_i = (a_i ~ b_i)$ with $a_i < b_i$, then $b_1 \leq b_2 \leq \cdots \leq b_m$.
\end{itemize}
\end{definition}

\begin{remark}
Note that the simple Hurwitz numbers and monotone Hurwitz numbers defined here may differ from those appearing elsewhere in the literature by simple combinatorial factors. For example, the monotone Hurwitz numbers defined by Goulden, Guay--Paquet and Novak~\cite{gou-gua-nov13a, gou-gua-nov13b, gou-gua-nov14a} are related to ours in the following way, where $\text{Aut } \mmu$ denotes the set of permutations of the tuple $\mmu = (\mu_1, \mu_2, \ldots, \mu_n)$ that leave it fixed.
\[
\H_{g,n}^{\text{GGN}}(\mmu) = \H_{g,n}^{\text{DDM}}(\mmu) \times \frac{|\mmu|!}{\# \text{Aut } \mmu}
\]
Our normalisation of the monotone Hurwitz numbers is particularly well-suited to our purposes.
\end{remark}

\begin{example}
Of the 27 factorisations in $S_3$ with three transpositions, there are 24 that satisfy the transitivity property.
\begin{center}
\begin{multicols}{4}
 $(1~2) \circ (1~2) \circ (1~3)$ \\
 $(1~2) \circ (1~2) \circ (2~3)$ \\
 $(1~2) \circ (1~3) \circ (1~3)$ \\
 $(1~2) \circ (1~3) \circ (2~3)$ \\
 $(1~2) \circ (2~3) \circ (1~3)$ \\
 $(1~2) \circ (2~3) \circ (2~3)$ \\
 $(1~3) \circ (1~3) \circ (2~3)$ \\
 $(1~3) \circ (2~3) \circ (1~3)$ \\
 $(1~3) \circ (2~3) \circ (2~3)$ \\
 $(2~3) \circ (1~3) \circ (1~3)$ \\
 $(2~3) \circ (1~3) \circ (2~3)$ \\
 $(2~3) \circ (2~3) \circ (1~3)$
\end{multicols}

\begin{multicols}{4}
 $(1~2) \circ (1~3) \circ (1~2)$ \\
 $(1~2) \circ (2~3) \circ (1~2)$ \\
 $(1~3) \circ (1~2) \circ (1~2)$ \\
 $(1~3) \circ (1~2) \circ (1~3)$ \\
 $(1~3) \circ (1~2) \circ (2~3)$ \\
 $(1~3) \circ (1~3) \circ (1~2)$ \\
 $(1~3) \circ (2~3) \circ (1~2)$ \\
 $(2~3) \circ (1~2) \circ (1~2)$ \\
 $(2~3) \circ (1~2) \circ (1~3)$ \\
 $(2~3) \circ (1~2) \circ (2~3)$ \\
 $(2~3) \circ (1~3) \circ (1~2)$ \\
 $(2~3) \circ (2~3) \circ (1~2)$
\end{multicols}
\end{center}
Since all such products result in a permutation of cycle type $(1, 2)$, we obtain $H_{0,2}(1,2) = \frac{24}{3!} = 4$. Of these 24 factorisations, only the first 12 are monotone, so we obtain $\H_{0,2}(1,2) = \frac{12}{3!} = 2$.
\end{example}

More generally, one can define \emph{double monotone Hurwitz numbers} $\H_{g, m, n} (\boldsymbol{\lambda}; \mmu)$, which enumerate branched covers of $\mathbb{CP}^1$ with simple ramification except for the ramification profile $\boldsymbol{\lambda} = (\lambda_1, \lambda_2, \ldots, \lambda_m)$ over 0 and the ramification profile $\mmu = (\mu_1, \mu_2, \ldots, \mu_n)$ over $\infty$.
The initial motivation for their study was the work of Goulden, Guay--Paquet and Novak on the asymptotic expansion of the HCIZ matrix integral over the unitary group~\cite{gou-gua-nov14a}.
\[
{\cal I}_N(z; A, B) = \int_{U(N)} \exp \left[ -zN ~ \mathrm{Tr}(AUBU^{-1}) \right] \dd U
\]
The monotone Hurwitz numbers arise naturally as coefficients in the large $N$ asymptotic expansion of $\frac{1}{N^2} \log {\cal I}_N(z; A, B)$.

It is worth remarking that the monotonicity condition for Hurwitz numbers is also natural from the standpoint of the Jucys--Murphy elements of the symmetric group algebra. These are defined as
\[
J_k = (1 ~ k) + (2 ~ k) + \cdots + (k-1 ~~ k) \in \mathbb{C}[S_d], \qquad \text{for } k = 2, 3, \ldots, d.
\]
A theorem of Jucys states that symmetric polynomials in the Jucys--Murphy elements generate the centre $Z\mathbb{C}[S_d]$ of the symmetric group algebra~\cite{juc}.

If we remove the transitivity property from Definition~\ref{def:mhurwitz}, we obtain the \emph{disconnected monotone Hurwitz numbers} $\H^\bullet_{g,n}(\mmu)$. One can check that that these are related to the Jucys--Murphy elements via the equation
\[
\prod_{i=1}^n \mu_i \times \H^\bullet_{g,n}(\mmu) = [C_{\mmu}] \, h_m(J_2, J_3, J_4, \ldots, J_{|\mmu|}).
\]
In this formula, $m = 2g - 2 + n + |\mmu|$ as in Definition~\ref{def:mhurwitz} and $h_m$ denotes the complete homogeneous symmetric polynomial of degree $m$. The notation $[C_{\mmu}]$ on the right hand side requires us to take the coefficient of the conjugacy class $C_{\mmu}$ in the subsequent expression. One can relate the connected monotone Hurwitz numbers to the disconnected ones by forming the appropriate exponential generating functions and applying the exponential formula. This technique is applied in the proof of Proposition~\ref{pro:zcoefficients}.

Beyond making explicit the connection between monotone Hurwitz numbers and the HCIZ integral, Goulden, Guay--Paquet and Novak deduce results for monotone Hurwitz numbers that have analogues in the classical theory of Hurwitz numbers. The two that are of particular importance to us are the cut-and-join recursion and polynomiality, which we present in the remainder of this section.

\subsection{Monotone cut-and-join recursion}

The classical cut-and-join recursion expresses a simple Hurwitz number in terms of Hurwitz numbers of lesser complexity. The measure of complexity of the Hurwitz number $H_{g,n}(\mmu)$ is $m = 2g - 2 + n + |\mmu|$, which is the number of transpositions in the corresponding factorisation, or equivalently, the number of simple branch points in the corresponding branched cover. So the cut-and-join recursion provides an effective recursion for computing simple Hurwitz numbers from the base case $H_{0,1}(1) = 1$. Goulden, Guay--Paquet and Novak prove the following version of the cut-and-join recursion in the monotone case.

\begin{proposition}[Monotone cut-and-join recursion~\cite{gou-gua-nov13a}] \label{pro:cutjoin}
The monotone Hurwitz numbers satisfy the following recursion, where $S = \{1, 2, \ldots, n\}$ and $\mmu_I = (\mu_{i_1}, \mu_{i_2}, \ldots, \mu_{i_k})$ for $I = \{i_1, i_2, \ldots, i_k\}$.
\begin{align*}
\mu_1 \H_{g,n}(\mmu_S) =& \sum_{i = 2}^n (\mu_1+\mu_i) \, \H_{g,n-1}(\mu_1+\mu_i, \mmu_{S \setminus \{1,i\}}) + \sum_{\alpha + \beta = \mu_1} \alpha \beta \, \H_{g-1,n+1}(\alpha, \beta, \mmu_{S \setminus \{1\}}) \\
&+ \sum_{\alpha + \beta = \mu_1} \mathop{\sum_{g_1+g_2=g}}_{I \sqcup J = S \setminus \{1\}} \alpha \beta \, \H_{g_1, |I|+1}(\alpha, \mmu_I) \, \H_{g_2, |J|+1}(\beta, \mmu_J)
\end{align*}
\end{proposition}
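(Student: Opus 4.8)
The plan is to prove the recursion combinatorially, by interpreting $\mu_1\,\H_{g,n}(\mmu)$ as a weighted count of monotone factorisations of length $m=2g-2+n+|\mmu|$ equipped with a marked point in the cycle labelled $1$, and then analysing the effect of deleting a single transposition. The three terms on the right-hand side should correspond exactly to the three ways the cycle type and connectivity can change: a cut of $\rho$ that separates cycles $1$ and $i$ (equivalently, a join running the other way), a cut within cycle $1$ that lowers the genus, and a cut within cycle $1$ that disconnects the factorisation. This is the monotone analogue of the classical join-cut analysis, whose right-hand side already has precisely this shape, so I would model the argument on the classical cut-and-join recursion for simple Hurwitz numbers.

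Concretely, write $\pi=\sigma_1\circ\cdots\circ\sigma_m$ for the product permutation, of cycle type $\mmu$, and consider the final transposition $\sigma_m=(a~b)$. Setting $\rho=\pi\circ\sigma_m$, the truncation $\sigma_1\circ\cdots\circ\sigma_{m-1}$ is a factorisation of $\rho$ of length $m-1$. The one feature absent from the classical argument is the monotonicity constraint $b_1\le\cdots\le b_m$: deleting an arbitrary transposition from a monotone word need not leave a monotone word, and one cannot freely re-append a transposition without possibly violating the weakly-increasing condition. The observation that tames this is that, by transitivity, the largest letter $|\mmu|$ is moved and therefore occurs as the top of some transposition; since it is the largest letter it is the largest possible top, so $\sigma_m$ has top $b_m=|\mmu|$. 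Appending a transposition whose top equals the maximal letter never violates monotonicity, so the truncation $\pi\mapsto\rho$ and the re-appending $\rho\mapsto\pi$ constitute a genuine bijection between monotone words of length $m$ for $\pi$ and monotone words of length $m-1$ for the relevant permutations $\rho$. Establishing this bijection cleanly is where I expect the core of the work to lie.

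With monotonicity under control, I would carry out the standard case split according to whether $a$ and $b$ lie in the same or in different cycles of $\rho$. If they lie in the same cycle, appending $\sigma_m$ cuts it, so $\rho$ has type $(\mu_1+\mu_i,\mmu_{S\setminus\{1,i\}})$ with cycles $1$ and $i$ merged; counting the cut positions in a cycle of length $\mu_1+\mu_i$ that separate the marked point yields the factor $(\mu_1+\mu_i)$ and the first term, with data $\H_{g,n-1}$. If $a$ and $b$ lie in different cycles of $\rho$, appending $\sigma_m$ joins them into cycle $1$, which is thereby split as $\alpha+\beta=\mu_1$; the number of such join transpositions is $\alpha\beta$, and the Riemann--Hurwitz bookkeeping forces $(g-1,n+1)$ when $\rho$ is connected (the second term) and a sum over $g_1+g_2=g$, $I\sqcup J=S\setminus\{1\}$ with pieces $\H_{g_1,|I|+1}$ and $\H_{g_2,|J|+1}$ when $\rho$ is disconnected (the third term). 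Each case is a count identical in form to the classical one.

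The genuine obstacle, beyond the monotonicity bijection already noted, is that the maximal-top transposition is forced to involve the largest letter $|\mmu|$, so the distinguished cycle the argument naturally produces is the one containing $|\mmu|$ rather than the cycle labelled $1$. I would reconcile these using the symmetry of $\H_{g,n}(\mmu)$ under permuting its arguments: the recursion derived for the cycle containing the largest letter is, read as an identity in $\mmu$, symmetric and valid for every value of the distinguished part, hence equivalent to the stated form with cycle $1$ distinguished. Verifying that this symmetrisation reproduces exactly the coefficients $(\mu_1+\mu_i)$ and $\alpha\beta$, rather than coefficients skewed by the top being pinned to $|\mmu|$, is the delicate point, and I would sanity-check it against the base cases and the worked value $\H_{0,2}(1,2)=2$ before trusting the general count. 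An alternative route that sidesteps the distinguished-letter issue entirely is algebraic: realise the disconnected numbers via $\prod_i\mu_i\,\H^\bullet_{g,n}(\mmu)=[C_\mmu]\,h_m(J_2,\ldots,J_{|\mmu|})$, extract a one-step recursion for $h_m$ from the generating identity $\sum_m h_m(J_2,\ldots,J_d)\,t^m=\prod_{k=2}^{d}(1-tJ_k)^{-1}$, read off the cut-and-join from the resulting class-algebra action, and finally pass from disconnected to connected numbers by the exponential formula.
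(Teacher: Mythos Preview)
Your proposal is correct and follows the same approach the paper sketches: remove the last transposition $\sigma_m$ and carry out the cut/join case analysis on $\rho = \pi \circ \sigma_m$, with monotonicity handled via the observation that transitivity forces $b_m = |\mmu|$. The paper does not actually prove this proposition in detail (it cites Goulden--Guay-Paquet--Novak and gives only the one-paragraph sketch), so your outline is already more thorough than what appears there; the distinguished-cycle concern you flag is genuine, and its resolution is exactly the class-function property of monotone factorisation counts that the Jucys--Murphy description in your alternative route supplies.
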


The monotone cut-and-join recursion can be proved in a similar way to its classical counterpart. One considers the effect of multiplying both sides of the equation
\[
(a_1 ~ b_1) \circ (a_2 ~ b_2) \circ \cdots \circ (a_m ~ b_m) = \sigma
\]
on the right by the transposition $(a_m ~ b_m)$. If $a_m$ and $b_m$ belong to the same cycle of $\sigma$, then that cycle is cut into two in the product $\sigma \circ (a_m ~ b_m)$. Otherwise, $a_m$ and $b_m$ belong to two disjoint cycles of $\sigma$ and these are joined into one in the product $\sigma \circ (a_m ~ b_m)$. A careful combinatorial analysis of these cases produces the desired result.

We recall from Section~\ref{sec:intro} that the free energies for the monotone Hurwitz problem are the following natural generating functions.
\[
F_{g,n}(z_1, z_2, \ldots, z_n) = \sum_{\mu_1, \mu_2, \ldots, \mu_n = 1}^\infty \H_{g,n}(\mu_1, \mu_2, \ldots, \mu_n)~x_1^{\mu_1} x_2^{\mu_2} \cdots x_n^{\mu_n}
\]

\begin{lemma} \label{lem:fenergy01}
The monotone Hurwitz numbers of type $(0,1)$ are given by the formula $\H_{0,1}(\mu) = \binom{2\mu}{\mu} \, \frac{1}{2\mu (2\mu-1)}$. The corresponding free energy satisfies the equation $\frac{\partial}{\partial x_1} F_{0,1}(z_1) = z_1$.
\end{lemma}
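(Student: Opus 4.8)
The plan is to extract both statements from the monotone cut-and-join recursion of Proposition~\ref{pro:cutjoin} specialised to $(g,n) = (0,1)$. First I would record the base case: for $\mu = 1$ we have $m = 2g-2+n+|\mmu| = 0$, so the only factorisation is the empty one on a single fixed point, whence $\H_{0,1}(1) = \frac{1}{1!} = 1$, matching the claimed formula. For $\mu \geq 2$, setting $S = \{1\}$ makes the first sum in Proposition~\ref{pro:cutjoin} empty (there is no index $i$), kills the genus-reducing term since $\H_{-1,2} \equiv 0$, and forces $g_1 = g_2 = 0$ and $I = J = \emptyset$ in the final sum. The recursion therefore collapses to
\[
\mu \, \H_{0,1}(\mu) = \sum_{\alpha + \beta = \mu} \alpha \beta \, \H_{0,1}(\alpha) \, \H_{0,1}(\beta), \qquad \mu \geq 2.
\]

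Next I would package this into a generating function. Writing $\phi(x) = \sum_{\mu \geq 1} \mu \, \H_{0,1}(\mu) \, x^\mu$, the displayed recursion says that $[x^\mu] \phi = [x^\mu] \phi^2$ for $\mu \geq 2$, while the base case contributes $[x^1]\phi = 1$; together these are equivalent to the functional equation $\phi(x) = x + \phi(x)^2$. Solving the quadratic and choosing the root that vanishes at $x = 0$ gives $\phi(x) = \tfrac{1}{2}(1 - \sqrt{1-4x})$, the generating function of the Catalan numbers, so that $\mu \, \H_{0,1}(\mu) = \frac{1}{\mu}\binom{2\mu-2}{\mu-1}$. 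A short manipulation of factorials then rewrites this as $\H_{0,1}(\mu) = \binom{2\mu}{\mu}\frac{1}{2\mu(2\mu-1)}$, which is the first claim.

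For the free energy, observe that $F_{0,1}(z_1) = \sum_{\mu} \H_{0,1}(\mu) \, x_1^\mu$ is a power series in $x_1$, so $\frac{\partial}{\partial x_1} F_{0,1}(z_1) = \sum_{\mu} \mu \, \H_{0,1}(\mu) \, x_1^{\mu-1} = \phi(x_1)/x_1 = \frac{1 - \sqrt{1 - 4x_1}}{2x_1}$. It remains to identify this with $z_1$. Since $x(z) = \frac{z-1}{z^2}$, the variable $z_1$ satisfies $x_1 z_1^2 - z_1 + 1 = 0$, whose two roots are $\frac{1 \pm \sqrt{1 - 4x_1}}{2x_1}$; expanding at $x_1 = 0$ shows that the branch with $z_1 \to 1$ is precisely the one with the minus sign, which coincides with $\phi(x_1)/x_1$. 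Hence $\frac{\partial}{\partial x_1} F_{0,1}(z_1) = z_1$.

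The computation is essentially routine; the only points requiring care are the bookkeeping of which terms of the cut-and-join recursion survive at $(g,n) = (0,1)$ together with the separate treatment of the $\mu = 1$ base case, and the correct choice of square-root branch when inverting $x(z)$, where one must check that the combinatorially natural branch (the series with nonnegative coefficients, vanishing at $x_1 = 0$) is the geometric branch through $z_1 = 1$.
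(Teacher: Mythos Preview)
Your proposal is correct and follows essentially the same approach as the paper: specialise the monotone cut-and-join recursion to $(g,n)=(0,1)$, recognise the resulting convolution as the Catalan recursion, and then identify $\partial_{x_1} F_{0,1}$ with $z_1$ via the Catalan generating function and the spectral curve relation $x=\frac{z-1}{z^2}$. Your write-up is slightly more explicit about the base case, the vanishing of the other cut-and-join terms, and the branch choice, but these are exactly the details underlying the paper's shorter argument.
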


\begin{proof}
The monotone cut-and-join recursion in the case $(g,n) = (0,1)$ reads
\[
\mu \H_{0,1}(\mu) = \sum_{\alpha + \beta = \mu} \alpha \beta \, \H_{0,1}(\alpha) \, \H_{0,1}(\beta).
\]
Writing $\mu \H_{0,1}(\mu) = C_{\mu-1}$, we see that this is precisely the recursion for the Catalan numbers. Given that the base case $1 \H_{0,1}(1) = C_0 = 1$ is in agreement, we obtain $\H_{0,1}(\mu) = \frac{1}{\mu} C_{\mu-1} = \binom{2\mu}{\mu} \, \frac{1}{2\mu (2\mu-1)}$, as required. 

One then obtains the relation for the corresponding free energy via the following chain of equalities.
\[
\frac{\partial}{\partial x_1} F_{0,1}(z_1) = \sum_{\mu=1}^\infty \mu \H_{0,1}(\mu) x_1^{\mu-1} = \sum_{\mu=0}^\infty C_\mu x_1^{\mu} = \frac{1 - \sqrt{1-4x_1}}{2x_1} = z_1
\]
The last two equalities follow from the well-known generating function for Catalan numbers and the spectral curve relation $x = \frac{z-1}{z^2}$.
\end{proof}

\begin{proposition} \label{pro:cutjoingf}
For $(g,n) \neq (0,1)$, the free energies satisfy the following recursion, where the $\circ$ over the final summation means that we exclude all terms involving $F_{0,1}$.
\begin{align*}
(1-2x_1 z_1) x_1 \frac{\partial}{\partial x_1} F_{g,n}(z_S) =&~ \sum_{i=2}^n \frac{x_1x_i}{x_1-x_i} \left[ \frac{\partial}{\partial x_1} F_{g,n-1}(z_{S \setminus \{i\}}) - \frac{\partial}{\partial x_i} F_{g,n-1}(z_{S \setminus \{1\}}) \right] \\
&+ \left[ x(t_1) x(t_2) \, \frac{\partial^2}{\partial x(t_1) \, \partial x(t_2)} F_{g-1,n+1}(t_1, t_2, z_{S \setminus \{1\}}) \right]_{t_1 = t_2 = z_1} \\
&+ \mathop{\sum_{g_1 + g_2 = g}}_{I \sqcup J = S \setminus \{1\}}^\circ \left[ x_1 \frac{\partial}{\partial x_1} F_{g_1, |I|+1}(z_1, z_I) \right] \left[ x_1 \frac{\partial}{\partial x_1} F_{g_2, |J|+1}(z_1, z_J) \right]
\end{align*}
\end{proposition}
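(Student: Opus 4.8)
The plan is to multiply the monotone cut-and-join recursion of Proposition~\ref{pro:cutjoin} by the monomial $x_1^{\mu_1} x_2^{\mu_2} \cdots x_n^{\mu_n}$ and sum over all $\mu_1, \mu_2, \ldots, \mu_n \geq 1$, interpreting each resulting term as an operation on the free energies. The left-hand side $\mu_1 \H_{g,n}(\mmu_S)$ transforms immediately into $x_1 \frac{\partial}{\partial x_1} F_{g,n}(z_S)$, since $x_1 \frac{\partial}{\partial x_1}$ acts on $x_1^{\mu_1}$ as multiplication by $\mu_1$. The aim is then to show that the three sums on the right of the cut-and-join recursion produce, respectively, the join term, the genus-reduction term, and the splitting term of the claimed identity, once a certain family of contributions is relocated to the left.

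For the genus-reduction sum $\sum_{\alpha+\beta=\mu_1} \alpha\beta \, \H_{g-1,n+1}(\alpha,\beta,\mmu_{S \setminus \{1\}})$, summing against $x_1^{\mu_1}$ decouples $\alpha$ and $\beta$ into independent sums, each weighted by $\alpha$ or $\beta$; introducing two formal arguments $t_1, t_2$, applying $x(t_1)\frac{\partial}{\partial x(t_1)}$ and $x(t_2)\frac{\partial}{\partial x(t_2)}$, and specialising $t_1 = t_2 = z_1$ reproduces the second term on the right. The splitting sum factorises analogously into a product of two generating functions, each of the form $x_1\frac{\partial}{\partial x_1}F_{g_i,|\cdot|+1}(z_1, z_\cdot)$. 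The join sum $\sum_{i=2}^n (\mu_1+\mu_i)\,\H_{g,n-1}(\mu_1+\mu_i, \mmu_{S \setminus \{1,i\}})$ requires the only nontrivial algebraic identity: setting $\nu=\mu_1+\mu_i$ and summing the inner monomials gives $\sum_{\mu_1=1}^{\nu-1} x_1^{\mu_1} x_i^{\nu-\mu_1} = \frac{x_1 x_i}{x_1 - x_i}\bigl(x_1^{\nu-1}-x_i^{\nu-1}\bigr)$, so that after resumming over $\nu$ the $i$-th term becomes $\frac{x_1 x_i}{x_1-x_i}\bigl[\frac{\partial}{\partial x_1}F_{g,n-1}(z_{S \setminus \{i\}}) - \frac{\partial}{\partial x_i}F_{g,n-1}(z_{S \setminus \{1\}})\bigr]$, where the two derivatives track the joined cycle of length $\nu$ by $z_1$ and by $z_i$ respectively.

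The one genuinely delicate point --- and the reason the final sum carries the $\circ$ and the left-hand side the prefactor $(1-2x_1 z_1)$ --- is the treatment of the unstable $F_{0,1}$ contributions arising in the splitting sum. These occur exactly when $(g_1, I) = (0, \emptyset)$ or when $(g_2, J) = (0, \emptyset)$; for $(g,n)\neq(0,1)$ these two cases are distinct, and each produces a factor $\bigl[x_1\frac{\partial}{\partial x_1}F_{0,1}(z_1)\bigr]\bigl[x_1\frac{\partial}{\partial x_1}F_{g,n}(z_S)\bigr]$. By Lemma~\ref{lem:fenergy01} we have $\frac{\partial}{\partial x_1}F_{0,1}(z_1)=z_1$, so together they contribute $2x_1 z_1 \cdot x_1 \frac{\partial}{\partial x_1} F_{g,n}(z_S)$. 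Transposing this to the left and combining with the term $x_1\frac{\partial}{\partial x_1}F_{g,n}(z_S)$ already there yields the prefactor $(1-2x_1 z_1)$, while the $\circ$ records the removal of precisely these two terms from the splitting sum.

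I expect this relocation --- keeping track of exactly which $F_{0,1}$ terms appear, and verifying that they do not coincide once $(g,n)\neq(0,1)$ so that the correct factor of $2$ emerges --- to be the main subtlety. All remaining steps are term-by-term identities between formal power series in the variables $x_i$, and so require no analytic justification; the only external inputs are Proposition~\ref{pro:cutjoin} and Lemma~\ref{lem:fenergy01}.
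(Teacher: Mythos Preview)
Your proposal is correct and follows essentially the same route as the paper's proof: multiply the cut-and-join recursion by the monomial, sum over all $\mu_i \geq 1$, identify each of the three resulting terms with the corresponding generating-function expression (using the same geometric-series identity for the join term), and then relocate the two $F_{0,1}$ contributions to the left via Lemma~\ref{lem:fenergy01}. Your explicit remark that the two $F_{0,1}$ terms are distinct precisely when $(g,n)\neq(0,1)$, yielding the factor $2$ in $(1-2x_1 z_1)$, is a point the paper leaves implicit.
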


\begin{proof}
Multiply both sides of the monotone cut-and-join recursion of Proposition~\ref{pro:cutjoin} by the monomial $x_1^{\mu_1} x_2^{\mu_2} \cdots x_n^{\mu_n}$ and sum over all positive integers $\mu_1, \mu_2, \ldots, \mu_n$. There are three types of terms that must be dealt with.
\begin{itemize}
\item The left hand side yields terms of the following form.
\[
\sum_{\mu_1=1}^\infty \mu_1 \H(\mu_1) \, x_1^{\mu_1} = x_1 \frac{\partial}{\partial x_1} F(z_1)
\]
\item The first term on the right hand side yields terms of the following form.
\begin{align*}
\sum_{\mu_1,\mu_i=1}^\infty (\mu_1 + \mu_i) \, \H(\mu_1 + \mu_i) \, x_1^{\mu_1} x_i^{\mu_i} &= \sum_{\mu=1}^\infty \mu \H(\mu) ( x_1^{\mu-1} x_i + x_1^{\mu-2} x_i^2 + \cdots + x_1 x_i^{\mu-1} ) \\
&= \sum_{\mu=1}^\infty \mu \H(\mu) \, \frac{x_1 x_i}{x_1 - x_i} (x_1^{\mu-1} - x_i^{\mu-1}) \\
&= \frac{x_1 x_i}{x_1 - x_i} \left[ \frac{\partial}{\partial x_1} F(z_1) - \frac{\partial}{\partial x_i} F(z_i) \right]
\end{align*}
\item The last two terms on the right hand side yield terms of the following form.
\begin{align*}
\sum_{\mu_1=1}^\infty \sum_{\alpha + \beta = \mu_1} \alpha \beta \, \H(\alpha, \beta) \, x_1^{\mu_1} &= \sum_{\alpha, \beta = 1}^\infty \alpha \beta \, \H(\alpha, \beta) \, x_1^\alpha x_1^\beta \\
&= \left[ x(t_1) x(t_2) \, \frac{\partial^2}{\partial x(t_1) \, \partial x(t_2)} F(t_1, t_2) \right]_{t_1 = t_2 = z_1}
\end{align*}
\end{itemize}
After applying these transformations to the terms in the monotone cut-and-join recursion, move all terms involving $F_{0,1}$ in the last line on the right hand side to the left hand side. The desired result then follows after substituting $\frac{\partial}{\partial x_1} F_{0,1}(z_1) = z_1$, which is a consequence of Lemma~\ref{lem:fenergy01}.
\end{proof}

\begin{proposition} \label{pro:fenergy02}
The monotone Hurwitz numbers of type $(0,2)$ are given by the formula $\H_{0,2}(\mu_1, \mu_2) = \binom{2\mu_1}{\mu_1} \binom{2\mu_2}{\mu_2} \, \frac{1}{2(\mu_1+\mu_2)}$.
\end{proposition}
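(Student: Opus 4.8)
The plan is to specialise the generating-function form of the cut-and-join recursion (Proposition~\ref{pro:cutjoingf}) to $(g,n) = (0,2)$ and solve it, and then to compare the resulting generating function with that of the proposed closed form. When $(g,n) = (0,2)$ the middle term of the recursion involves $F_{-1,3}$ and hence vanishes, while every term of the final sum involves a factor of $F_{0,1}$ and is therefore excluded by the $\circ$ convention. Only the first sum survives, with its single index $i = 2$, so after substituting $\frac{\partial}{\partial x_1} F_{0,1}(z_1) = z_1$ from Lemma~\ref{lem:fenergy01} the recursion collapses to
\[
(1 - 2 x_1 z_1)\, x_1 \frac{\partial}{\partial x_1} F_{0,2}(z_1, z_2) = \frac{x_1 x_2}{x_1 - x_2}\,(z_1 - z_2).
\]
First I would rewrite both sides using the spectral curve relation $x = \frac{z-1}{z^2}$. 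A routine computation gives $1 - 2 x_1 z_1 = \frac{2 - z_1}{z_1}$ and $\frac{x_1 x_2 (z_1 - z_2)}{x_1 - x_2} = \frac{(z_1-1)(z_2-1)}{z_1 + z_2 - z_1 z_2}$, whence
\[
x_1 \frac{\partial}{\partial x_1} F_{0,2}(z_1, z_2) = \frac{z_1 (z_1 - 1)(z_2 - 1)}{(2 - z_1)(z_1 + z_2 - z_1 z_2)}.
\]

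Next, since $\H_{0,2}$ is symmetric in its two arguments, $F_{0,2}$ is a symmetric power series, so adding the expression above to its image under the transposition $z_1 \leftrightarrow z_2$ computes the action of the Euler operator $\mathcal{E} = x_1 \frac{\partial}{\partial x_1} + x_2 \frac{\partial}{\partial x_2}$. Placing the two fractions over the common factor $z_1 + z_2 - z_1 z_2$ causes it to cancel, leaving the clean expression
\[
\mathcal{E}\, F_{0,2}(z_1, z_2) = \frac{2(z_1 - 1)(z_2 - 1)}{(2 - z_1)(2 - z_2)}.
\]

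Then I would show that the proposed formula produces exactly the same Euler image. Writing $G(x_1,x_2) = \sum_{\mu_1,\mu_2 \geq 1} \binom{2\mu_1}{\mu_1}\binom{2\mu_2}{\mu_2} \frac{1}{2(\mu_1+\mu_2)} x_1^{\mu_1} x_2^{\mu_2}$ for the generating function of the claimed formula, the operator $\mathcal{E}$ scales the coefficient of $x_1^{\mu_1} x_2^{\mu_2}$ by $\mu_1 + \mu_2$, which cancels the denominator and factorises the sum into
\[
\mathcal{E}\, G = \frac{1}{2}\left( \frac{1}{\sqrt{1 - 4x_1}} - 1 \right)\left( \frac{1}{\sqrt{1 - 4x_2}} - 1 \right).
\]
Using $\sqrt{1 - 4x} = 1 - 2xz = \frac{2 - z}{z}$, a consequence of the Catalan identity $z = \frac{1 - \sqrt{1-4x}}{2x}$ recorded in Lemma~\ref{lem:fenergy01}, each factor becomes $\frac{2(z_i - 1)}{2 - z_i}$, so $\mathcal{E} G$ equals the closed form displayed above for $\mathcal{E} F_{0,2}$.

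Finally, I would conclude by invoking injectivity of $\mathcal{E}$: on power series in $x_1, x_2$ whose monomials all have total degree $\mu_1 + \mu_2 \geq 2$, the operator $\mathcal{E}$ acts as multiplication by the positive integer $\mu_1 + \mu_2$ on each coefficient and is therefore injective. Since $F_{0,2}$ and $G$ are both such series with equal $\mathcal{E}$-images, they coincide, which is precisely the claimed formula. The two points to get right are the bookkeeping in the first step --- confirming that the single surviving contribution is the lone first-sum term --- and the recognition that passing to $\mathcal{E}$ both sidesteps the awkward factor $\frac{1}{\mu_1+\mu_2}$ (which otherwise obstructs a termwise comparison) and is, by injectivity, strong enough to pin down $F_{0,2}$ exactly.
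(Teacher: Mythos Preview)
Your proof is correct and follows essentially the same route as the paper's own argument: specialise Proposition~\ref{pro:cutjoingf} to $(g,n)=(0,2)$, simplify using the spectral-curve relation and Lemma~\ref{lem:fenergy01}, symmetrise to obtain the Euler-operator image $2\frac{(z_1-1)(z_2-1)}{(z_1-2)(z_2-2)}$, recognise this as $\frac{1}{2}\sum \binom{2\mu_1}{\mu_1}\binom{2\mu_2}{\mu_2}x_1^{\mu_1}x_2^{\mu_2}$ via the central-binomial generating function, and then invert the Euler operator. Your presentation is slightly more explicit about why the other cut-and-join terms vanish and about the injectivity of $\mathcal{E}$, but the argument is the same.
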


\begin{proof}
Proposition~\ref{pro:cutjoingf} in the case $(g,n) = (0,2)$ reads
\[
(1-2x_1 z_1) x_1 \frac{\partial}{\partial x_1} F_{0,2}(z_1, z_2) = \frac{x_1x_2}{x_1-x_2} \left[ \frac{\partial}{\partial x_1} F_{0,1}(z_1) - \frac{\partial}{\partial x_2} F_{0,1}(z_2) \right]. \]
Now substitute $x = \frac{z-1}{z^2}$ and $\frac{\partial}{\partial x_1} F_{0,1}(z_1) = z_1$ to obtain
\[
x_1 \frac{\partial}{\partial x_1} F_{0,2}(z_1, z_2) = \frac{z_1 (z_1-1) (z_2-1)}{(z_1-2) (z_1z_2-z_1-z_2)}.
\]
By the symmetry in $F_{0,2}(z_1, z_2)$, we may write
\begin{align*}
\left( x_1 \frac{\partial}{\partial x_1} + x_2 \frac{\partial}{\partial x_2} \right) F_{0,2}(z_1, z_2) &= \frac{z_1 (z_1-1) (z_2-1)}{(z_1-2) (z_1z_2-z_1-z_2)} + \frac{z_2 (z_1-1) (z_2-1)}{(z_2-2) (z_1z_2-z_1-z_2)} \\
&= 2 \frac{(z_1-1)(z_2-1)}{(z_1-2)(z_2-2)}.
\end{align*}
Consider the fact that $-2\frac{z-1}{z-2} = \sum_{\mu=1}^\infty \binom{2\mu}{\mu} \, x^\mu$, which follows from applying $\frac{\partial}{\partial x} \left[ x \, \cdot \, \right]$ to the Catalan generating function used in the proof of Lemma~\ref{lem:fenergy01}. Substituting this into the previous equation yields
\[
\left( x_1 \frac{\partial}{\partial x_1} + x_2 \frac{\partial}{\partial x_2} \right) F_{0,2}(z_1, z_2) = \frac{1}{2} \sum_{\mu_1, \mu_2 = 1}^\infty \binom{2\mu_1}{\mu_1} \binom{2\mu_2}{\mu_2} x_1^{\mu_1} x_2^{\mu_2},
\]
from which it follows that
\[
F_{0,2}(z_1, z_2) = \sum_{\mu_1, \mu_2 = 1}^\infty \binom{2\mu_1}{\mu_1} \binom{2\mu_2}{\mu_2} \, \frac{1}{2(\mu_1+\mu_2)} ~ x_1^{\mu_1} x_2^{\mu_2}.
\]
The desired expression for $\H_{0,2}(\mu_1, \mu_2)$ is obtained by comparing the coefficient of $x_1^{\mu_1} x_2^{\mu_2}$ on both sides of this equation.
\end{proof}

\subsection{Polynomiality}

One may observe the appearance of the central binomial coefficients $\binom{2\mu}{\mu}$ in formulas for the monotone Hurwitz numbers $\H_{0,1}(\mu)$ and $\H_{0,2}(\mu_1, \mu_2)$ above. In fact, this behaviour persists for all monotone Hurwitz numbers and we have the following polynomiality result due to Goulden, Guay--Paquet and Novak.

\begin{proposition}[Polynomiality of monotone Hurwitz numbers~\cite{gou-gua-nov13b}] \label{pro:polynomiality}
The monotone Hurwitz numbers may be expressed as
\[
\H_{g,n}(\mu_1, \mu_2, \ldots, \mu_n) = \prod_{i=1}^n \binom{2\mu_i}{\mu_i} \times \P_{g,n}(\mu_1, \mu_2, \ldots, \mu_n),
\]
where $\P_{g,n}$ is a symmetric rational function. For $(g,n) \neq (0,1)$ or $(0,2)$, $\P_{g,n}$ is a polynomial with rational coefficients of degree $3g-3+n$.
\end{proposition}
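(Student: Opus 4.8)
The plan is to prove the result by induction on the quantity $2g-2+n$, using the generating-function form of the cut-and-join recursion in Proposition~\ref{pro:cutjoingf} expressed in the rational coordinate $z$. Symmetry of $\P_{g,n}$ is immediate, since $\H_{g,n}$ is manifestly symmetric in $\mu_1, \ldots, \mu_n$ and the prefactor $\prod_i \binom{2\mu_i}{\mu_i}$ is symmetric, so the real content is rationality together with the polynomiality and degree claims for $(g,n) \neq (0,1), (0,2)$. The cases $(0,1)$ and $(0,2)$ are genuinely exceptional and serve as the base of the induction: Lemma~\ref{lem:fenergy01} and Proposition~\ref{pro:fenergy02} give $\P_{0,1}(\mu) = \frac{1}{2\mu(2\mu-1)}$ and $\P_{0,2}(\mu_1,\mu_2) = \frac{1}{2(\mu_1+\mu_2)}$, which are symmetric rational functions but not polynomials. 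Every term on the right hand side of Proposition~\ref{pro:cutjoingf} has $2g-2+n$ strictly smaller by one, so the induction is well-founded.

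The key device is a dictionary between the arithmetic form of the coefficients and the analytic form of the generating function in $z$. Starting from $-2\frac{z-1}{z-2} = \sum_{\mu \geq 1} \binom{2\mu}{\mu} x^\mu$ and using $x\frac{\partial}{\partial x} = \frac{z(z-1)}{2-z}\frac{\partial}{\partial z}$, one checks that each application of $x\frac{\partial}{\partial x}$ multiplies the coefficient by $\mu$ and raises the order of the pole at the ramification point $z = 2$ by exactly two. Consequently $\sum_\mu \binom{2\mu}{\mu}\,Q(\mu)\,x^\mu$ is, for a polynomial $Q$ of degree $d$, a rational function of $z$ whose only singularity is a pole of order at most $2d+1$ at $z = 2$, and conversely such rational functions have coefficients of exactly this shape. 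Thus the claim that $\H_{g,n} = \prod_i \binom{2\mu_i}{\mu_i} \cdot \P_{g,n}$ with $\P_{g,n}$ polynomial of degree $3g-3+n$ translates into the assertion that $\frac{\partial}{\partial x_1} F_{g,n}$ is, in each variable $z_i$, a rational function with poles only at $z_i = 2$ and of a prescribed maximal order.

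For the inductive step one feeds this hypothesis into Proposition~\ref{pro:cutjoingf} and checks that the right hand side produces a rational function of $z_1$ whose only pole is at $z_1 = 2$. The left hand side carries the factor $1 - 2x_1 z_1 = \frac{2-z_1}{z_1}$, which vanishes precisely at $z_1 = 2$, so dividing through by it only increases the pole order at the ramification point, consistently with the expected growth of $\deg \P_{g,n}$. In the first (join) sum, the potential pole of $\frac{x_1 x_i}{x_1 - x_i}$ along $z_1 = z_i$ is killed by the bracketed difference $\frac{\partial}{\partial x_1} F_{g,n-1} - \frac{\partial}{\partial x_i} F_{g,n-1}$, which vanishes on that locus, and the surviving poles come from the inductive data at $z_1 = 2$. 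In the genus-reducing and splitting terms, the convolution over $\alpha + \beta = \mu_1$ becomes an honest product of generating functions evaluated on the diagonal $t_1 = t_2 = z_1$, so by the inductive hypothesis these contribute poles at $z_1 = 2$.

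The main obstacle is twofold. First, the degree bookkeeping: one must track the exact pole orders through every term of the recursion and through the division by $\frac{2-z_1}{z_1}$ to confirm that they assemble to give $\deg \P_{g,n} = 3g-3+n$ rather than merely some polynomial bound. Second, and more seriously, the splitting term admits factors of type $(0,2)$, and $\frac{\partial}{\partial x_1} F_{0,2}$ carries an extra diagonal pole away from $z_1 = 2$ (visible in Proposition~\ref{pro:fenergy02} as the factor $z_1 z_2 - z_1 - z_2$ in the denominator); the case $(0,3)$ already forces this issue. One must show that all such non-ramification poles cancel in the full sum on the right hand side, so that the net result is genuinely regular away from $z_1 = 2$. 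It is exactly this cancellation, rather than any single term, that forces the central-binomial-times-polynomial structure, and establishing it carefully is the heart of the argument.
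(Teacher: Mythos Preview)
The paper does not prove this proposition at all: it is stated with the citation~\cite{gou-gua-nov13b} and attributed to Goulden, Guay--Paquet and Novak, who establish it in their series of papers. The paper simply imports the result and uses it (notably to derive Corollary~\ref{cor:fenergies}). So your proposal is not a comparison target---you are attempting to supply a proof where the paper supplies none.

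As a proof strategy, your outline is reasonable and in the spirit of how such polynomiality results are typically obtained, but as written it is a plan rather than a proof. You correctly identify the two genuine difficulties---precise pole-order bookkeeping and the cancellation of the off-ramification poles contributed by the $(0,2)$ factors---but you do not carry either of them out. In particular, the second point is not a technicality: the diagonal singularity of $\frac{\partial}{\partial x_1} F_{0,2}$ really does appear in individual terms of the splitting sum, and showing it cancels against the corresponding contribution from the join term (or otherwise) requires an explicit computation, not just the observation that it must. There is also a gap in the converse direction of your dictionary: you assert that a rational function of $z$ with only a pole at $z=2$ of order at most $2d+1$ necessarily has power-series coefficients of the form $\binom{2\mu}{\mu} Q(\mu)$ with $\deg Q \leq d$, but this needs an argument (for instance, that the functions $f_0, f_1, \ldots, f_d$ span the relevant space). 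Finally, the recursion in Proposition~\ref{pro:cutjoingf} controls the pole structure in $z_1$ only; you still need to pass from regularity in $z_1$ to regularity in every variable, which the symmetry of $F_{g,n}$ provides but should be said explicitly. Until those steps are filled in, the proposal remains a sketch.
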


The table below contains $\P_{g,n}$ for various small values of $g$ and $n$.
\rulecolor{linkblue}
\begin{center}
\begin{tabular}{ll} \toprule
$(g,n)$ & $\P_{g,n}(\mu_1, \mu_2, \ldots, \mu_n)$ \\ \midrule
$(0,1)$ & $\frac{1}{2} \left[ 2\mu_1^2 - \mu_1 \right]^{-1}$ \\
$(0,2)$ & $\frac{1}{2} (\mu_1+\mu_2)^{-1}$ \\
$(0,3)$ & $1$ \\
$(0,4)$ & $2\mu_1 + 2\mu_2 + 2\mu_3 + 2\mu_4 + 1$ \\
$(1,1)$ & $\frac{1}{12} (\mu_1 - 1)$ \\
$(1,2)$ & $\frac{1}{12} (2\mu_1^2 + 2\mu_2^2 + 2\mu_1\mu_2 - \mu_1 - \mu_2 - 1)$ \\
$(2,1)$ & $\frac{1}{720} (10\mu_1^4 - 7\mu_1^3 - 16\mu_1^2 + 7 \mu_1 + 6)$ \\ \bottomrule
\end{tabular}
\end{center}

The polynomiality of monotone Hurwitz numbers endows a certain structure on the free energies $F_{g,n}(z_1, z_2, \ldots, z_n)$. In order to state the result, write the polynomial $\P_{g,n}$ in Proposition~\ref{pro:polynomiality} as
\[
\P_{g,n}(\mu_1, \mu_2, \ldots, \mu_n) = \sum_{a_1, a_2, \ldots, a_n = 0}^\text{finite} C_{g,n}(a_1, a_2, \ldots, a_n) ~ \mu_1^{a_1} \mu_2^{a_2} \cdots \mu_n^{a_n}.
\]

Combining this notation with Proposition~\ref{pro:polynomiality} and the definition of the free energies allows us to express them as follows.
\begin{equation} \label{eq:fenergies}
F_{g,n}(z_1, z_2, \ldots, z_n) = \sum_{a_1, a_2, \ldots, a_n = 0}^\text{finite} C_{g,n}(a_1, a_2, \ldots, a_n) \, \prod_{i=1}^n \sum_{\mu_i=1}^\infty \binom{2\mu_i}{\mu_i} \mu_i^{a_i} x_i^{\mu_i}
\end{equation}

It is thus natural to define the auxiliary functions
\begin{equation} \label{eq:functions}
f_a(z) = \sum_{\mu=1}^\infty \binom{2\mu}{\mu} \mu^a x^\mu = \left( x \frac{\partial}{\partial x} \right)^a \sum_{\mu=1}^\infty \binom{2\mu}{\mu} x^\mu = \left( -\frac{z(z-1)}{z-2} \frac{\partial}{\partial z} \right)^a \frac{2-2z}{z-2}.
\end{equation}
The last equality here is obtained from the spectral curve relation $x = \frac{z-1}{z^2}$ as well as the fact that $-2\frac{z-1}{z-2} = \sum_{\mu=1}^\infty \binom{2\mu}{\mu} \, x^\mu$, used in the proof of Proposition~\ref{pro:fenergy02}. It is clear by induction that for all non-negative integers $a$, the function $f_a(z)$ is rational with a pole only at $z = 2$ of order $2a+1$. The table below lists the functions $f_a(z)$ for $0 \leq a \leq 5$.
\begin{center}
\begin{tabular}{ll} \toprule
$a$ & $f_a(z)$ \\ \midrule
0 & $-\frac{2(z-1)}{z-2}$ \\
1 & $- \frac{2z(z-1)}{(z-2)^3}$ \\
2 & $- \frac{2z(z-1) (z^2 + 2z - 2)}{(z-2)^5}$ \\
3 & $- \frac{2z(z-1) (z^4 + 10z^3 - 6z^2 - 8z + 4)}{(z-2)^7}$ \\
4 & $- \frac{2z(z-1) (z^6 + 30z^5 + 42z^4 - 136z^3 + 48z^2 + 24z - 8)}{(z-2)^9}$ \\
5 & $- \frac{2z(z-1) (z^8 + 74z^7 + 442z^6 - 568z^5 - 860z^4 + 1328z^3 - 368z^2 - 64z + 16)}{(z-2)^{11}}$ \\ \bottomrule
\end{tabular}
\end{center}

Using these functions, we obtain the following corollary of the polynomiality of monotone Hurwitz numbers.

\begin{corollary} \label{cor:fenergies}
For $(g,n) \neq (0,1)$ or $(0,2)$, the free energies can be expressed in the following way.
\[
F_{g,n}(z_1, z_2, \ldots, z_n) = \sum_{a_1, a_2, \ldots, a_n = 0}^\mathrm{finite} C_{g,n}(a_1, a_2, \ldots, a_n) \prod_{i=1}^n f_{a_i}(z_i)
\]
In particular, $F_{g,n}(z_1, z_2, \ldots, z_n)$ is a symmetric rational function in $z_1, z_2, \ldots, z_n$ with poles only at $z_i = 2$, for $i = 1, 2, \ldots, n$.
\end{corollary}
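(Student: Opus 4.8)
The plan is to obtain the displayed formula by a direct substitution into Equation~\eqref{eq:fenergies} and then to verify rationality, the pole structure, and symmetry in turn. In Equation~\eqref{eq:fenergies} the inner factor $\sum_{\mu_i=1}^\infty \binom{2\mu_i}{\mu_i} \mu_i^{a_i} x_i^{\mu_i}$ is, by Equation~\eqref{eq:functions}, exactly $f_{a_i}(z_i)$. Because $(g,n) \neq (0,1)$ or $(0,2)$, Proposition~\ref{pro:polynomiality} guarantees that $\P_{g,n}$ is a genuine polynomial of degree $3g-3+n$, so the outer index set of tuples $(a_1, \ldots, a_n)$ is finite. Substituting then yields
\[
F_{g,n}(z_1, \ldots, z_n) = \sum_{a_1, \ldots, a_n = 0}^{\mathrm{finite}} C_{g,n}(a_1, \ldots, a_n) \prod_{i=1}^n f_{a_i}(z_i).
\]
The one point deserving care is that Equation~\eqref{eq:fenergies} is an identity of power series in the $x_i$ --- an expansion about $x_i = 0$, equivalently about $z_i = 1$ under $x = \frac{z-1}{z^2}$ --- whereas the right-hand side is a rational function; the two agree because, by Equation~\eqref{eq:functions}, each $f_a(z)$ is at once this power series and the rational function exhibited there.

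For rationality and the location of the poles I would simply transport the properties of $f_a$ recorded after Equation~\eqref{eq:functions}: each $f_{a_i}(z_i)$ is rational in the single variable $z_i$ with its only pole at $z_i = 2$. A product $\prod_{i=1}^n f_{a_i}(z_i)$ is therefore rational with poles confined to the hyperplanes $z_i = 2$, and --- being a product of single-variable factors --- it produces no mixed poles coupling distinct variables. Summing over the finite index set preserves both properties, so $F_{g,n}$ is rational with poles only at $z_i = 2$. It is worth noting that this is precisely where the exclusion of $(0,2)$ is felt: the denominator $z_1 z_2 - z_1 - z_2$ appearing in $F_{0,2}$ is exactly such a mixed pole.

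Symmetry follows from the symmetry of $\P_{g,n}$: its coefficient array satisfies $C_{g,n}(a_{\sigma(1)}, \ldots, a_{\sigma(n)}) = C_{g,n}(a_1, \ldots, a_n)$ for every permutation $\sigma$, and since the same function $f_a$ is attached to every variable, permuting the $z_i$ only relabels the summation index and leaves the total unchanged. I do not expect a genuine obstacle here; the corollary is essentially a repackaging of polynomiality through the generating-function definition, and the only subtlety --- the formal-series-versus-rational-function identification --- is dispatched by the dual description of $f_a$ in Equation~\eqref{eq:functions}.
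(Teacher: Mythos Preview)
Your proposal is correct and follows exactly the reasoning implicit in the paper, which treats the corollary as an immediate consequence of Equation~\eqref{eq:fenergies}, the definition of $f_a$ in Equation~\eqref{eq:functions}, and the observation (made just after that equation) that each $f_a$ is rational with a pole only at $z=2$. The paper offers no separate proof beyond this, so your write-up simply makes explicit what the paper leaves to the reader.
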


One may compare Proposition~\ref{pro:polynomiality} with the classical case of simple Hurwitz numbers, in which the polynomiality result takes the following form, where $P_{g,n}$ is a symmetric polynomial with rational coefficients of degree $3g-3+n$.
\[
H_{g,n}(\mu_1, \mu_2, \ldots, \mu_n) = (2g-2+n+|\mmu|)! \times \prod_{i=1}^n \frac{\mu_i^{\mu_i}}{\mu_i!} \times P_{g,n}(\mu_1, \mu_2, \ldots, \mu_n)
\]
The coefficient of $\mu_1^{a_1} \mu_2^{a_2} \cdots \mu_n^{a_n}$ in the polynomial $P_{g,n}(\mu_1, \mu_2, \ldots, \mu_n)$ is the following intersection number on the moduli space $\overline{\mathcal M}_{g,n}$ of stable pointed curves.\footnote{For definitions of the moduli space $\overline{\mathcal M}_{g,n}$ of stable pointed curves as well as the cohomology classes $\psi_i \in H^2(\overline{\mathcal M}_{g,n}; \mathbb{Q})$ and $\lambda_k \in H^{2k}(\overline{\mathcal M}_{g,n})$, see the book of Harris and Morrison~\cite{har-mor}.}
\[
(-1)^{3g - 3 + n - |\mathbf{a}|} \int_{\overline{\mathcal M}_{g,n}} \psi_1^{a_1} \psi_2^{a_2} \cdots \psi_n^{a_n} \lambda_{3g - 3 + n - |\mathbf{a}|}
\]
This result is precisely the content of the celebrated ELSV formula~\cite{eke-lan-sha-vai}. It would be interesting to similarly determine an ELSV-type formula for monotone Hurwitz numbers that gives an algebro-geometric meaning to the coefficients of $\P_{g,n}$~\cite{gou-gua-nov13b}. The general theory of topological recursion allows us to immediately deduce such a result in the case of the highest degree coefficients. This forms the content of Proposition~\ref{pro:coeffs}.

\section{Topological recursion and quantum curves} \label{sec:toprec}

\subsection{Topological recursion} \label{subsec:toprec}

The topological recursion of Eynard and Orantin was initially inspired by the theory of matrix models~\cite{eyn-ora07a}. It formalises and generalises the loop equations, which may be used to calculate perturbative expansions of matrix model correlation functions. As mentioned in the introduction, the topological recursion uses a spectral curve $\mathcal C$ to define meromorphic multidifferentials $\omega_{g,n}$ on ${\mathcal C}^n$, for integers $g \geq 0$ and $n \geq 1$. More precisely, $\omega_{g,n}$ is a meromorphic section of the line bundle $\pi_1^*(T^*{\mathcal C}) \otimes \pi_2^*(T^*{\mathcal C}) \otimes \cdots \otimes \pi_n^*(T^*{\mathcal C})$ on the Cartesian product ${\mathcal C}^n$, where $\pi_i: {\mathcal C}^n \to {\mathcal C}$ denotes projection onto the $i$th factor.

{\bf Input.} The recursion takes as input a \emph{spectral curve}, which consists of a compact Riemann surface $\mathcal C$ endowed with two meromorphic functions $x$ and $y$, as well as a Torelli marking.\footnote{A\emph{Torelli marking} of $\mathcal C$ is a choice of a symplectic basis of $H_1({\mathcal C}, \mathbb{Z})$.} We require all zeroes of $\mathrm{d}x$ to be simple and distinct from zeroes of $\mathrm{d}y$. Extensions of the topological recursion to more general spectral curves have appeared in the literature, although they are more involved than we need here~\cite{bou-eyn}.

{\bf Base cases.} The base cases for the recursion are
\[
\omega_{0,1}(z_1) = -y(z_1)~\dd x(z_1) \qquad \text{and} \qquad \omega_{0,2}(z_1, z_2) = B(z_1, z_2).
\]
Here, $B(z_1, z_2)$ is the unique meromorphic bidifferential on ${\mathcal C} \times {\mathcal C}$ that
\begin{itemize}
\item is symmetric: $B(z_1, z_2) = B(z_2, z_1)$;
\item is normalised on the $A$-cycles of $H_1({\mathcal C}, \mathbb{Z})$: $\oint_{A_i} B(z_1, \,\cdot\,) = 0$; and
\item has double poles without residue along the diagonal $z_1 = z_2$ but is holomorphic away from the diagonal: $B(z_1, z_2) = \frac{\dd z_1~\dd z_2}{(z_1-z_2)^2} + \text{holomorphic}$.
\end{itemize}
The bidifferential $B(z_1, z_2)$ is a natural construction that is referred to as the fundamental normalised differential of the second kind on $\mathcal C$. In the case that ${\mathcal C} = \mathbb{CP}^1$, the bidifferential is given by the formula $B(z_1, z_2) = \frac{\dd z_1 ~ \dd z_2}{(z_1 - z_2)^2}$.

{\bf Output.} Define the multidifferentials $\omega_{g,n}$ by the following recursive equation, where $S = \{1, 2, \ldots, n\}$ and $z_I = (z_{i_1}, z_{i_2}, \ldots, z_{i_k})$ for $I = \{i_1, i_2, \ldots, i_k\}$.
\begin{equation} \label{eq:toprec}
\omega_{g,n}(z_S) = \sum_{\alpha} \mathop{\text{Res}}_{z=\alpha} \, K(z_1, z) \Bigg[ \omega_{g-1,n+1}(z, \z, z_{S \setminus \{1\}}) + \mathop{\sum_{g_1+g_2=g}^\circ}_{I \sqcup J = S \setminus \{1\}} \omega_{g_1, |I|+1}(z, z_I) \, \omega_{g_2, |J|+1}(\z, z_J) \Bigg]
\end{equation}
Here, the outer summation is over the zeroes $\alpha$ of $\dd x$. Since we have assumed that the zeroes are simple, there exists a unique non-identity meromorphic function $z \mapsto \z$ defined on a neighbourhood of $\alpha \in {\mathcal C}$ such that $x(\z) = x(z)$. The symbol $\circ$ over the inner summation denotes the fact that we exclude all terms that involve $\omega_{0,1}$. The kernel $K$ appearing in the residue is defined by the following equation, where $o$ is an arbitrary base point on the spectral curve.\footnote{Note that the numerator of the kernel here differs from that given in the seminal paper of Eynard and Orantin~\cite{eyn-ora07a}. The modified kernel presented here appears in various subsequent sources in the literature~\cite{bou-eyn}. The two kernels yield precisely the same results, due to general properties of the correlation differentials produced by the topological recursion.}
\[
K(z_1, z) = - \frac{\int_o^z \omega_{0,2}(z_1, \,\cdot\,)}{[y(z) - y(\z)] \, \dd x(z)}
\]

The multidifferentials $\omega_{g,n}$ have been referred to in the literature as both Eynard--Orantin invariants and correlation functions. Since they are neither functions nor invariant under prescribed transformations, we will use the term \emph{correlation differentials}.

The topological recursion has found wide applicability beyond the realm of matrix models, where it was first conceived. It is now known to govern a variety of problems in enumerative geometry and mathematical physics, with conjectural relations to many more. The table below lists some of these alongside their associated spectral curves, with those yet to be rigorously proven below the dashed line.

\begin{center}
\begin{tabular}{@{}p{104.5mm}@{}ll@{}} \toprule
PROBLEM & \multicolumn{2}{@{}l@{}}{SPECTRAL CURVE} \\ \midrule
intersection theory on moduli spaces of curves~\cite{eyn-ora07a} & $x(z) = z^2$ & $y(z) = z$ \\
enumeration of ribbon graphs~\cite{nor13, dum-mul-saf-sor} & $x(z) = z + \frac{1}{z}$ & $y(z) = z$ \\
Weil--Petersson volumes of moduli spaces~\cite{eyn-ora07b} & $x(z) = z^2$ & $y(z) = \frac{\sin(2\pi z)}{2\pi}$ \\ 
stationary Gromov--Witten theory of $\mathbb{P}^1$~\cite{nor-sco, dun-ora-sha-spi} & $x(z) = z + \frac{1}{z}$ & $y(z) = \log(z)$ \\
simple and orbifold Hurwitz numbers~\cite{bor-eyn-mul-saf, eyn-mul-saf, do-lei-nor, bou-her-liu-mul} & $x(z) = z \exp(-z^a)$ & $y(z) = z^a$ \\
Gromov--Witten theory of toric Calabi--Yau threefolds~\cite{eyn-ora12} \hspace{10pt} & \multicolumn{2}{@{}l@{}}{mirror curves} \\
\multicolumn{3}{@{}c@{}}{\makebox[\linewidth]{\dashrule[linkblue]}} \\
enumeration of hypermaps~\cite{do-man} & $x(z) = z^{a-1} + \frac{1}{z}$ & $y(z) = z$ \\
spin Hurwitz numbers~\cite{mul-sha-spi} & $x(z) = z \exp(-z^r)$ & $y(z) = z$ \\
asymptotics of coloured Jones polynomials of knots~\cite{dij-fuj-man, bor-eyn} & \multicolumn{2}{@{}l@{}}{$A$-polynomials} \\ \bottomrule
\end{tabular}
\end{center}

\subsection{Quantum curves} \label{subsec:qcurves}

Following the work of Gukov and Su{\l}kowski, we use the correlation differentials produced by the topological recursion to define \emph{free energies}~\cite{guk-sul}.
\[
F_{g,n}(z_1, z_2, \ldots, z_n) = \int_p^{z_1} \!\! \int_p^{z_2} \! \cdots \! \int_p^{z_n} \omega_{g,n}(z_1, z_2, \ldots, z_n)
\]
We choose a point $p$ on the spectral curve such that $x(p) = \infty$ as the base point for each of the nested integrals~\cite{bor-eyn}. The free energies are in turn used to define the following wave function.
\[
Z(x, \h) = \exp \left[ \sum_{g=0}^\infty \sum_{n=1}^\infty \frac{\h^{2g-2+n}}{n!} \, F_{g,n}(z, z, \ldots, z) \right]
\]
Note that the definition provided here applies only in the case of genus zero spectral curves. For higher genus, it has been proposed in the physics literature that one should include non-perturbative correction terms involving derivatives of theta functions associated to the spectral curve~\cite{bor-eyn}.

For a spectral curve of the form $A(x, y) = 0$, one can ask whether there exists a quantisation $\widehat{A}(\widehat{x}, \widehat{y})$ satisfying certain conditions. First, this quantisation should be non-commutative in the sense that $\widehat{x}$ and $\widehat{y}$ satisfy $[\widehat{x}, \widehat{y}] = \h$. So it is natural for the multiplication operator $\widehat{x} = x$ and the differentiation operator $\widehat{y} = -\h \frac{\partial}{\partial x}$ to be chosen as the polarisation. Second, $\widehat{A}(\widehat{x}, \widehat{y})$ should annihilate the wave function $Z(x, \h)$. So we call $\widehat{A}(\widehat{x}, \widehat{y})$ a \emph{quantum curve} if the spectral curve $A(x, y) = 0$ is recovered in the semi-classical limit $\h \to 0$ and the following Schr\"{o}dinger-like equation is satisfied.
\[
\widehat{A}(\widehat{x}, \widehat{y}) \, Z(x, \h) = 0
\]

Gukov and Su{\l}kowski posit the existence of a quantum curve for a spectral curve $C$ if and only if a certain $K$-theoretic condition is satisfied --- namely, that the tame symbol $\{x, y\} \in K_2(\mathbb{C}(C))$ is a torsion class~\cite{guk-sul}. Note that this condition is automatically satisfied when the spectral curve has genus zero. Moreover, they combine the calculation of the wave function via the topological recursion along with the Schr\"{o}dinger-like equation in order to solve for $\widehat{A}$ order by order in powers of $\h$.
\[
\widehat{A} = \widehat{A}_0 + \h \widehat{A}_1 + \h^2 \widehat{A}_2 + \cdots
\]
The paper of Gukov and Su{\l}kowski demonstrates the efficacy of this quantisation process by calculating the first few terms of $\widehat{A}$ and using these to predict the form of the quantum curve in several cases of geometric interest~\cite{guk-sul}. This approach amounts to performing quantisation by travelling the long way around the following schematic diagram.

\begin{center}
\includegraphics{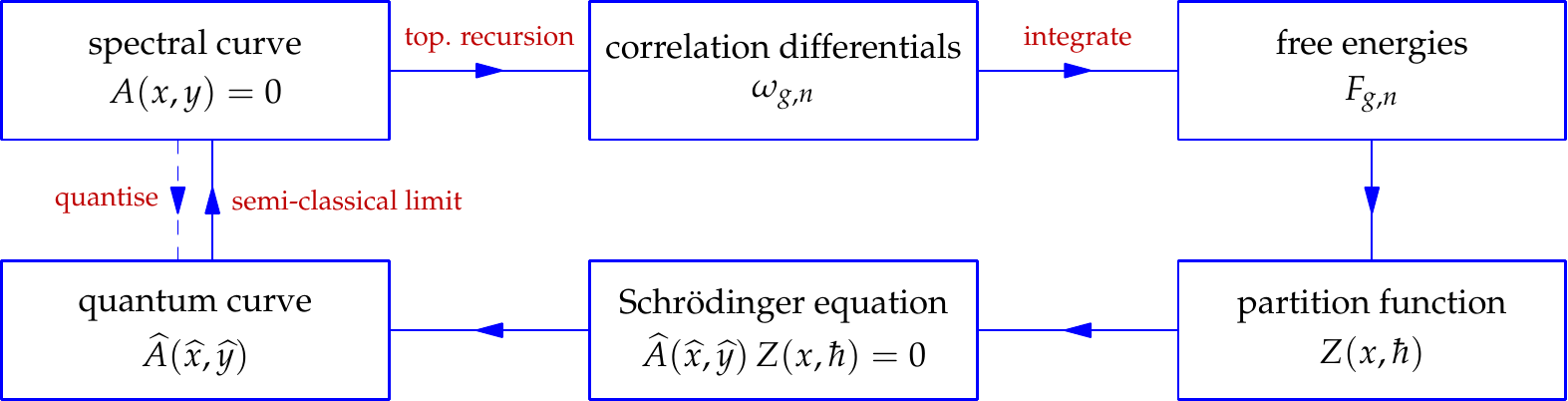}
\end{center}

The quantum curve has been rigorously established for several problems and their associated spectral curves, including intersection theory on moduli spaces of curves~\cite{zho12a}, enumeration of ribbon graphs~\cite{mul-sul}, stationary Gromov--Witten theory of $\mathbb{P}^1$~\cite{dun-mul-nor-pop-sha}, simple Hurwitz numbers and their generalisations~\cite{mul-sha-spi}, and open string invariants for $\mathbb{C}^3$ and the resolved conifold~\cite{zho12b}. The quantum curve for the $A$-polynomial of a knot should recover the $q$-difference operator that appears in the AJ conjecture of Garoufalidis and Le that relates the $A$-polynomial to the coloured Jones polynomials~\cite{gar-le, dij-fuj-man, bor-eyn}.

\section{Topological recursion for monotone Hurwitz numbers} \label{sec:theorem1}

\subsection{Correlation differentials}

In this section, we prove Theorem~\ref{thm:toprec}, which states that the topological recursion applied to the spectral curve $\mathbb{CP}^1$ with functions $x, y: \mathbb{CP}^1 \to \mathbb{C}$ given by
\[
x(z) = \frac{z-1}{z^2} \qquad \text{and} \qquad y(z) = -z
\]
produces correlation differentials whose expansions at $x_i = 0$ satisfy\footnote{Here and for the remainder of the paper, we omit the $\otimes$ in the notation for multidifferentials, so that $\dd x_1 ~ \dd x_2 ~ \cdots \dd x_n$ means $\dd x_1 \otimes \dd x_2 \otimes \cdots \otimes \dd x_n$.}
\[
\omega_{g,n}(z_1, z_2, \ldots, z_n) = \frac{\partial}{\partial x_1} \frac{\partial}{\partial x_2} \cdots \frac{\partial}{\partial x_n} F_{g,n}(z_1, z_2, \ldots, z_n)~\dd x_1 ~ \dd x_2 ~ \cdots ~ \dd x_n, \qquad \text{for $(g,n) \neq (0,2)$}.
\]

For the given spectral curve, the only zero of $\dd x$ occurs at $z = 2$.\footnote{Eynard and Orantin refer to the zeroes of $\dd x$ as \emph{branch points}~\cite{eyn-ora07a}. Note that the meromorphic function $x: \mathbb{CP}^1 \to \mathbb{C}$ here has a branch point in the classical sense at $z = 0$, which is not a zero of $\dd x$. So the point $z = 0$ on the spectral curve does not feature in the topological recursion.} The local involution $z \mapsto \z$ defined near $z = 2$ satisfies $x(\z) = x(z)$, which leads to $\z = \frac{z}{z-1}$.\footnote{The use of $\z$ to denote the local involution should not cause confusion, since complex conjugation plays no role in our work.} In fact, since the meromorphic function $x$ on the spectral curve defines a twofold branched cover of $\mathbb{CP}^1$, this extends to a global involution on the spectral curve that exchanges the two sheets. Finally, recall that the kernel of the topological recursion is given by
\[
K(z_1, z) = - \frac{\int_o^z \omega_{0,2}(z_1, \,\cdot\,)}{[y(z) - y(\z)] \, \dd x(z)} =  \frac{\dd z_1}{z_1-z} \frac{1}{(z-\z) \, \dd x(z)},
\]
where we have chosen the base point $o$ to be the point $z = \infty$ on the spectral curve.

With this information, it is straightforward to compute the correlation differentials $\omega_{g,n}$ for small values of $g$ and $n$. Presently, we require the following calculations, the first two of which are the base cases for the topological recursion.
\begin{align*}
\omega_{0,1}(z_1) &= z_1 ~ \dd x_1 \\
\omega_{0,2}(z_1, z_2) &= \frac{\dd z_1 ~ \dd z_2}{(z_1 - z_2)^2} \\
\omega_{0,3}(z_1, z_2, z_3) &= \frac{8 ~ \dd z_1 ~ \dd z_2 ~ \dd z_3}{(z_1-2)^2 (z_2-2)^2 (z_3-2)^2} \\
\omega_{1,1}(z_1) &= \frac{z_1-1}{(z_1-2)^4} ~ \dd z_1
\end{align*}

Let us define
\begin{equation} \label{eq:omega}
\Omega_{g,n}(z_1, z_2, \ldots, z_n) = \frac{\partial}{\partial x_1} \frac{\partial}{\partial x_2} \cdots \frac{\partial}{\partial x_n} F_{g,n}(z_1, z_2, \ldots, z_n) ~ \dd x_1 ~ \dd x_2 ~ \cdots ~ \dd x_n.
\end{equation}
Corollary~\ref{cor:fenergies} states that for $(g,n) \neq (0,1)$ or $(0,2)$, $\Omega_{g,n}(z_1, z_2, \ldots, z_n)$ is a symmetric rational multidifferential on the $n$-fold Cartesian product of the spectral curve with poles only at $z_i = 2$, for $i = 1, 2, \ldots, n$. Theorem~\ref{thm:toprec} requires us to prove that $\omega_{g,n} = \Omega_{g,n}$ for $(g,n) \neq (0,2)$. For small values of $g$ and $n$, the multidifferentials $\Omega_{g,n}$ can be computed using the generating function form of the monotone cut-and-join recursion of Proposition~\ref{pro:cutjoingf}. This allows us to verify Theorem~\ref{thm:toprec} for small cases.

\begin{proposition} \label{pro:smallcases}
The correlation differentials satisfy $\omega_{g,n} = \Omega_{g,n}$ for $(g,n) = (0,1)$, $(0,3)$, $(1,1)$ and
\[
\omega_{0,2}(z_1, z_2) = \Omega_{0,2}(z_1, z_2) + \frac{\dd x_1 ~ \dd x_2}{(x_1 - x_2)^2}.
\]
\end{proposition}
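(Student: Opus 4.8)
The plan is to dispatch all four cases by direct computation, but to organise the work around a single simplification and a structural dichotomy: for $(g,n) = (0,1)$, $(0,3)$, $(1,1)$ the free energy $F_{g,n}$ is assembled from the single-variable functions $f_a(z)$ of~\eqref{eq:functions} and hence factorises over the variables, whereas $(0,2)$ is genuinely two-variable and is precisely where the correction term appears. The uniform simplification I would record first is that $\frac{\partial}{\partial x}(\,\cdot\,)\,\dd x = \dd(\,\cdot\,)$, so that each factor $\frac{\partial}{\partial x_i} F_{g,n}\,\dd x_i$ in the definition~\eqref{eq:omega} of $\Omega_{g,n}$ is just the ordinary differential of $F_{g,n}$ in the variable $z_i$. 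This converts the computation of $\Omega_{g,n}$ into differentiating $F_{g,n}$ in the $z_i$ and reading off the coefficient of $\dd z_1 \cdots \dd z_n$, which matches the form in which the $\omega_{g,n}$ are already presented above.

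The case $(0,1)$ is immediate: Lemma~\ref{lem:fenergy01} gives $\frac{\partial}{\partial x_1} F_{0,1}(z_1) = z_1$, so $\Omega_{0,1}(z_1) = z_1\,\dd x_1 = \omega_{0,1}(z_1)$. For $(0,3)$ and $(1,1)$ I would invoke Corollary~\ref{cor:fenergies} together with the tabulated polynomials $\P_{0,3} = 1$ and $\P_{1,1} = \tfrac{1}{12}(\mu_1 - 1)$ to write
\[
F_{0,3}(z_1,z_2,z_3) = f_0(z_1)\,f_0(z_2)\,f_0(z_3), \qquad F_{1,1}(z_1) = \tfrac{1}{12}\bigl[f_1(z_1) - f_0(z_1)\bigr].
\]
Because these expressions factorise over the variables, $\Omega_{0,3}$ is simply the product of the single-variable differentials $\dd f_0(z_i)$, and $\Omega_{1,1} = \tfrac{1}{12}\,\dd\!\bigl[f_1 - f_0\bigr]$. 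Differentiating the entries $f_0$ and $f_1$ from the $f_a(z)$ table yields $\dd f_0 = \tfrac{2}{(z-2)^2}\,\dd z$ and, after a short computation, $\tfrac{1}{12}\,\dd[f_1 - f_0] = \tfrac{z-1}{(z-2)^4}\,\dd z$. Comparing with the explicitly listed $\omega_{0,3}$ and $\omega_{1,1}$ closes these two cases.

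The remaining case $(0,2)$ is the main obstacle, since it is excluded from Corollary~\ref{cor:fenergies} and $F_{0,2}$ does not factorise. Here I would compute $\Omega_{0,2}$ directly from Proposition~\ref{pro:fenergy02}, using the intermediate formula $x_1\frac{\partial}{\partial x_1}F_{0,2} = \frac{z_1(z_1-1)(z_2-1)}{(z_1-2)(z_1 z_2 - z_1 - z_2)}$ from its proof to obtain $\frac{\partial}{\partial x_1}F_{0,2}$, then apply $\frac{\partial}{\partial x_2}$ via $\frac{\partial}{\partial x_2} = -\frac{z_2^3}{z_2-2}\frac{\partial}{\partial z_2}$. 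The target is the identity
\[
\Omega_{0,2}(z_1,z_2) = \frac{\dd z_1\,\dd z_2}{(z_1-z_2)^2} - \frac{\dd x_1\,\dd x_2}{(x_1-x_2)^2},
\]
which is equivalent to the asserted relation since $\omega_{0,2} = \frac{\dd z_1 \dd z_2}{(z_1-z_2)^2}$. I would verify it by substituting $x = \frac{z-1}{z^2}$ throughout, using the factorisation $x_1 - x_2 = -\frac{(z_1-z_2)(z_1 z_2 - z_1 - z_2)}{z_1^2 z_2^2}$ to expand the correction term. The delicate point is the bookkeeping of this correction: one must check that the diagonal double pole of $\frac{\dd z_1 \dd z_2}{(z_1-z_2)^2}$ is exactly cancelled by the $x$-coordinate term, so that the difference is a genuine rational bidifferential with poles only at $z_i = 2$, consistent with the structure the free energies must have. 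This cancellation is exactly the standard $(0,2)$ discrepancy between the canonical bidifferential $B$ and the double derivative of $\int\!\int B$ in a non-trivial coordinate, and confirming it for the present curve is where the real content of the proposition lies.
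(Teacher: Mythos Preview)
Your proof is correct and follows essentially the same approach as the paper: direct computation for the easy cases and, for $(0,2)$, the very same route via the intermediate formula $x_1\frac{\partial}{\partial x_1}F_{0,2}$ from Proposition~\ref{pro:fenergy02} followed by the algebraic identity relating $\Omega_{0,2} + \frac{\dd x_1\,\dd x_2}{(x_1-x_2)^2}$ to $\frac{\dd z_1\,\dd z_2}{(z_1-z_2)^2}$. The only cosmetic difference is that for $(0,3)$ and $(1,1)$ you read $F_{g,n}$ off from the tabulated $\P_{g,n}$ and Corollary~\ref{cor:fenergies}, whereas the paper phrases this as computing $\Omega_{g,n}$ from the generating-function cut-and-join recursion; these amount to the same calculation.
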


\begin{proof}
As mentioned above, the correlation differentials $\omega_{g,n}$ can be computed using the topological recursion. On the other hand, the multidifferentials $\Omega_{g,n}$ can be computed using the generating function form of the monotone cut-and-join recursion. We discuss here only the most subtle case, which occurs when $(g,n) = (0,2)$. In the proof of Proposition~\ref{pro:fenergy02}, we obtained the equation
\[
x_1 \frac{\partial}{\partial x_1} F_{0,2}(z_1, z_2) = \frac{z_1 (z_1-1) (z_2-1)}{(z_1-2) (z_1z_2-z_1-z_2)}.
\]
One can divide through by $x_1$ and differentiate with respect to $x_2$ to obtain an expression for $\frac{\Omega_{0,2}(z_1, z_2)}{\dd x_1 ~ \dd x_2}$, which is used in the following calculation.
\begin{align*}
\frac{\Omega_{0,2}(z_1, z_2)}{\dd x_1 ~ \dd x_2} + \frac{1}{(x_1-x_2)^2} &= \frac{z_1^3}{z_1-2} \frac{z_2^3}{z_2-2} \frac{1}{(z_1z_2-z_1-z_2)^2} + \frac{z_1^4 z_2^4}{(z_1-z_2)^2 (z_1z_2-z_1-z_2)^2} \\
&= \frac{z_1^3}{z_1-2} \frac{z_2^3}{z_2-2} \frac{1}{(z_1-z_2)^2} \\
&= \frac{\dd z_1}{\dd x_1} \frac{\dd z_2}{\dd x_2} \frac{1}{(z_1-z_2)^2}
\end{align*}
The desired result is obtained by multiplying both sides of this equation by $\dd x_1 ~ \dd x_2$.
\end{proof}

We require one more preliminary result before commencing the proof of Theorem~\ref{thm:toprec}.

\begin{lemma} \label{lem:sympart}
For $(g,n) \neq (0,1)$ or $(0,2)$, the sum $F_{g,n}(z_1, z_2, \ldots, z_n) + F_{g,n}(\z_1, z_2, \ldots, z_n)$ is holomorphic at $z_1 = 2$. In fact, the sum is independent of $z_1$.
\end{lemma}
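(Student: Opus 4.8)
The plan is to exploit the explicit factorised form of the free energies furnished by Corollary~\ref{cor:fenergies}. Since $(g,n) \neq (0,1)$ or $(0,2)$, that corollary writes $F_{g,n}(z_1, \ldots, z_n) = \sum C_{g,n}(a_1, \ldots, a_n) \prod_i f_{a_i}(z_i)$, so the entire dependence on $z_1$ is carried by the factor $f_{a_1}(z_1)$. Consequently $F_{g,n}(z_1, z_2, \ldots, z_n) + F_{g,n}(\z_1, z_2, \ldots, z_n)$ equals $\sum C_{g,n}(a_1, \ldots, a_n)\,[f_{a_1}(z_1) + f_{a_1}(\z_1)]\,\prod_{i \geq 2} f_{a_i}(z_i)$. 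Thus the whole lemma reduces to the single-variable claim that $f_a(z) + f_a(\z)$ is a constant for every $a \geq 0$: if so, each bracket above is a constant while the remaining factors are independent of $z_1$, so the sum is independent of $z_1$ and, a fortiori, holomorphic at $z_1 = 2$.

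The key step is to prove this single-variable claim. First I would dispatch the base case $a = 0$ by the direct computation $f_0(\z) = \frac{2}{z-2}$ (using $\z = \frac{z}{z-1}$), which combines with $f_0(z) = -\frac{2(z-1)}{z-2}$ to give $f_0(z) + f_0(\z) = -2$. For general $a$ I would use that the involution $z \mapsto \z$ covers the identity on the $x$-line, so the differential operator $x\frac{\partial}{\partial x}$ appearing in~\eqref{eq:functions}, being the pullback of the vector field $x\frac{\mathrm{d}}{\mathrm{d}x}$ on the base, is invariant under $z \mapsto \z$; this can also be checked by hand from the expression $-\frac{z(z-1)}{z-2}\frac{\partial}{\partial z}$. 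Invariance means $x\frac{\partial}{\partial x}$ commutes with the substitution $z \mapsto \z$, so from $f_a = \left(x\frac{\partial}{\partial x}\right)^a f_0$ one obtains $f_a(z) + f_a(\z) = \left(x\frac{\partial}{\partial x}\right)^a[f_0(z) + f_0(\z)] = \left(x\frac{\partial}{\partial x}\right)^a(-2)$. Since $x\frac{\partial}{\partial x}$ annihilates constants, this equals $-2$ when $a = 0$ and $0$ when $a \geq 1$; in either case it is a constant, as required.

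The one point demanding care is that $\frac{\partial}{\partial x}$ degenerates at the branch point $z = 2$, where $\dd x$ vanishes, so the commutation of $x\frac{\partial}{\partial x}$ with the involution should properly be read as an identity away from $z = 2$. I expect this to be the only genuine obstacle, and it is mild: the functions $f_a$ are rational with poles only at $z = 2$ by~\eqref{eq:functions}, so the identity $f_a(z) + f_a(\z) = \mathrm{const}$, once established on the generic locus, persists as an identity of rational functions. Feeding the resulting constants back into the first paragraph then yields that $F_{g,n}(z_1, \ldots, z_n) + F_{g,n}(\z_1, z_2, \ldots, z_n)$ is independent of $z_1$, which completes the proof.
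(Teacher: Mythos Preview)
Your proposal is correct and follows essentially the same route as the paper: reduce via Corollary~\ref{cor:fenergies} to the single-variable statement that $f_a(z)+f_a(\z)$ is constant, verify $f_0(z)+f_0(\z)=-2$ by hand, and then use the inductive relation $f_k = x\frac{\partial}{\partial x} f_{k-1}$ together with the invariance of $x\frac{\partial}{\partial x}$ under $z\mapsto\z$ to conclude that $f_k(z)+f_k(\z)=0$ for $k\geq 1$. The paper additionally evaluates $f_k(0)$ to identify the constant sum explicitly as $2F_{g,n}(0,z_2,\ldots,z_n)$, but this is not needed for the lemma as stated.
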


\begin{proof}
The proof relies on the functions $f_0(z), f_1(z), f_2(z), \ldots$ defined by equation~(\ref{eq:functions}).
First, use the fact that $f_0(z) + f_0(\z) = f_0(z) + f_0(\tfrac{z}{z-1}) = -2$ to deduce that for all $k \geq 1$,
\[
f_k(z) + f_k(\z) = x \frac{\partial}{\partial x} \left[ f_{k-1}(z) + f_{k-1}(\z) \right] = 0.
\]
Second, use the fact that $f_0(0) = -1$ and the recursive definition of $f_k(z)$ to deduce that for all $k \geq 1$,
\[
f_k(0) = 0.
\]

Now apply these results to the expression for the free energies given in equation~(\ref{eq:fenergies}).
\begin{align*}
F_{g,n}(z_1, z_2, \ldots, z_n) + F_{g,n}(\z_1, z_2, \ldots, z_n) &= \sum_{a_1, a_2, \ldots, a_n = 0}^\text{finite} C_{g,n}(a_1, a_2, \ldots, a_n) \left[ f_{a_1}(z_1) +f_{a_1}(\z_1) \right] \prod_{i=2}^n f_{a_i}(z_i) \\
&= -2 \sum_{a_2, \ldots, a_n = 0}^\text{finite} C_{g,n}(0, a_2, \ldots, a_n) \prod_{i=2}^n f_{a_i}(z_i) \\
&= 2 F_{g,n}(0, z_2, \ldots, z_n)
\end{align*}
Thus, the sum $F_{g,n}(z_1, z_2, \ldots, z_n) + F_{g,n}(\z_1, z_2, \ldots, z_n)$ is independent of $z_1$ and hence, holomorphic at $z_1 = 2$.
\end{proof}

\subsection{Proof of Theorem~\ref{thm:toprec}}

In order to prove Theorem~\ref{thm:toprec}, we adopt the following strategy, which has previously been used to relate enumerative problems to topological recursion on a rational spectral curve. For example, it has proved successful in the case of enumeration of lattice points in moduli spaces of curves~\cite{nor13}, simple Hurwitz numbers~\cite{eyn-mul-saf}, and orbifold Hurwitz numbers~\cite{do-lei-nor,bou-her-liu-mul}.

\begin{itemize}
\item Write the underlying recursion for the enumerative problem --- sometimes known as a Tutte or cut-and-join recursion --- in generating function form. This yields a relation between the free energies which, for the present case, appears as Proposition~\ref{pro:cutjoingf}.

\item Take the symmetric part of the equation obtained, with respect to the local involution at each zero of $\dd x$. Express the equation in terms of multidifferentials rather than functions, before taking the principal part at the zero of $\dd x$. Recall that the \emph{principal part} of a meromorphic 1-form $\Omega(z)$ at $z = \alpha$ may be defined by
\begin{equation} \label{eq:ppart}
[\Omega(z)]_\alpha = \mathop{\mathrm{Res}}_{w=\alpha} \, \frac{\dd z}{z-w} \, \Omega(w).
\end{equation}

\item Add the equations obtained for each zero of $\dd x$ and compare with the topological recursion. Use the fact that in the case of a genus 0 spectral curve, the topological recursion given by equation~(\ref{eq:toprec}) expresses each correlation differential as the sum of its principal parts~\cite[Proposition 16]{do-lei-nor}.
\end{itemize}

The topological recursion for the spectral curve in question is given by the following equation, where $S = \{1, 2, \ldots, n\}$ and $z_I = (z_{i_1}, z_{i_2}, \ldots, z_{i_k})$ for $I = \{i_1, i_2, \ldots, i_k\}$.
\[
\omega_{g,n}(z_S) = \mathop{\text{Res}}_{z=2} \, K(z_1, z) \Bigg[ \omega_{g-1,n+1}(z, \z, z_{S \setminus \{1\}}) + \mathop{\sum_{g_1+g_2=g}^\circ}_{I \sqcup J = S \setminus \{1\}} \omega_{g_1, |I|+1}(z, z_I) \, \omega_{g_2, |J|+1}(\z, z_J) \Bigg]
\]

Proposition~\ref{pro:smallcases} asserts that Theorem~\ref{thm:toprec} is true when $(g,n) = (0,1)$, $(0,3)$ and $(1,1)$. So to establish Theorem~\ref{thm:toprec} in general, it is sufficient to show that the multidifferentials $\Omega_{g,n}$ satisfy the following equation for $(g,n) \neq (0,1)$, $(0,2)$, $(0,3)$ or $(1,1)$.
\begin{equation} \label{eq:topreca}
\begin{aligned}
\Omega_{g,n}(z_S) =& ~ \mathop{\text{Res}}_{z=2} \, K(z_1, z) \Bigg[ \Omega_{g-1,n+1}(z, \z, z_{S \setminus \{1\}}) + \mathop{\sum_{g_1+g_2=g}^{\text{stable}}}_{I \sqcup J = S \setminus \{1\}} \Omega_{g_1, |I|+1}(z, z_I) \, \Omega_{g_2, |J|+1}(\z, z_J) \\
&+ \sum_{i=2}^n \left[ \Omega_{g,n-1}(z, z_{S \setminus \{1, i\}}) \omega_{0,2}(\z, z_i) + \Omega_{g,n-1}(\z, z_{S \setminus \{1, i\}}) \omega_{0,2}(z, z_i) \right] \Bigg].
\end{aligned}
\end{equation}
Here, the word ``stable'' over the summation indicates that we exclude all terms involving $\Omega_{0,1}$ or $\Omega_{0,2}$. Note that we treat the $(0,2)$ terms separately, since $\omega_{0,2} \neq \Omega_{0,2}$.

Our starting point is the monotone cut-and-join recursion, in the generating function form stated as Proposition~\ref{pro:cutjoingf}.
\begin{equation} \label{eq:cut-and-join}
\begin{aligned} 
(1-2x_1 z_1) \, x_1 \frac{\partial}{\partial x_1} F_{g,n}(z_S) &= \sum_{i=2}^n \frac{x_1x_i}{x_1-x_i} \left[ \frac{\partial}{\partial x_1} F_{g,n-1}(z_{S \setminus \{i\}}) - \frac{\partial}{\partial x_i} F_{g,n-1}(z_{S \setminus \{1\}}) \right] \\
&+ \left[ x(t_1) x(t_2) \, \frac{\partial^2}{\partial x(t_1) \, \partial x(t_2)} F_{g-1,n+1}(t_1, t_2, z_{S \setminus \{1\}}) \right]_{t_1 = t_2 = z_1} \\
&+ \mathop{\sum_{g_1 + g_2 = g}}_{I \sqcup J = S \setminus \{1\}}^\circ \left[ x_1 \frac{\partial}{\partial x_1} F_{g_1, |I|+1}(z_1, z_I) \right] \left[ x_1 \frac{\partial}{\partial x_1} F_{g_2, |J|+1}(z_1, z_J) \right]
\end{aligned}
\end{equation}

We apply the involution $z_1 \mapsto \z_1$ to the equation above. More precisely, interpret both sides of the equation above as meromorphic functions on ${\mathcal C}^n$, the $n$-fold Cartesian product of the spectral curve. We pull back both sides using $z_1 \mapsto \z_1$ acting on the first component of ${\mathcal C}^n$.
\begin{equation} \label{eq:cut-and-join-involution}
\begin{aligned} 
(1-2x_1 \z_1) x_1 \frac{\partial}{\partial x_1} F_{g,n}(\z_1, z_{S \setminus \{1\}}) &= \sum_{i=2}^n \frac{x_1x_i}{x_1-x_i} \left[ \frac{\partial}{\partial x_1} F_{g,n-1}(\z_1, z_{S \setminus \{1, i\}}) - \frac{\partial}{\partial x_i} F_{g,n-1}(z_i, z_{S \setminus \{1, i\}}) \right] \\
&+ \left[ x(t_1) x(t_2) \frac{\partial^2}{\partial x(t_1) \, \partial x(t_2)} F_{g-1,n+1}(t_1, t_2, z_{S \setminus \{1\}}) \right]_{t_1 = t_2 = \z_1} \\
&+ \mathop{\sum_{g_1 + g_2 = g}}_{I \sqcup J = S \setminus \{1\}}^\circ \left[ x_1 \frac{\partial}{\partial x_1} F_{g_1, |I|+1}(\z_1, z_I) \right] \left[ x_1 \frac{\partial}{\partial x_1} F_{g_2, |J|+1}(\z_1, z_J) \right].
\end{aligned}
\end{equation}

Now take twice the symmetric part with respect to the involution by adding equations~(\ref{eq:cut-and-join}) and~(\ref{eq:cut-and-join-involution}). The left hand side yields the following, where we have used Lemma~\ref{lem:sympart}.
\begin{align*}
&~ (1-2x_1 z_1) x_1 \frac{\partial}{\partial x_1} F_{g,n}(z_S) + (1-2x_1 \z_1) x_1 \frac{\partial}{\partial x_1} F_{g,n}(\z_1, z_{S \setminus \{1\}}) \\
=&~ (1-2x_1 z_1) x_1 \frac{\partial}{\partial x_1} F_{g,n}(z_S) - (1-2x_1 \z_1) x_1 \frac{\partial}{\partial x_1} F_{g,n}(z_1, z_{S \setminus \{1\}}) \\
=&~ - 2x_1^2 (z_1 - \z_1) \frac{\partial}{\partial x_1} F_{g,n}(z_1, z_{S \setminus \{1\}})
\end{align*}

Similarly, the first term on the right hand side yields the following.
\begin{align*}
\sum_{i=2}^n \frac{-2 x_1 x_i}{x_1-x_i} \frac{\partial}{\partial x_i} F_{g,n-1}(z_{S \setminus \{1\}})
\end{align*}

The second term on the right hand side yields the following, where we have used Lemma~\ref{lem:sympart} to write $F_{g-1,n+1}(z, z, z_{S \setminus \{1\}}) + F_{g-1,n+1}(\z, \z, z_{S \setminus \{1\}}) = -2 F_{g-1,n+1}(z, \z, z_{S \setminus \{1\}}) + G(z_{S \setminus \{1\}})$, for some meromorphic function $G$.
\[
-2 x_1^2 \left[ \frac{\partial^2}{\partial x(t_1) \, \partial x(t_2)} F_{g-1,n+1}(t_1, t_2, z_{S \setminus \{1\}}) \right]_{\begin{subarray}{c} t_1 = z_1 \\ t_2 = \z_1 \end{subarray}}
\]

Similarly, the third term on the right hand side yields the following. Due to the exceptional nature of the $(0,2)$ terms, we consider them separately from the main summation. Again, we use the word ``stable'' over the summation to indicate that we exclude all terms involving $F_{0,1}$ or $F_{0,2}$.
\begin{align*}
-2 x_1^2 \mathop{\sum_{g_1 + g_2 = g}}_{I \sqcup J = S \setminus \{1\}}^{\text{stable}} &\left[ \frac{\partial}{\partial x_1} F_{g_1, |I|+1}(z_1, z_I) \right] \left[ \frac{\partial}{\partial x_1} F_{g_2, |J|+1}(\z_1, z_J) \right] \\
+ 2 \sum_{i=2}^n &\left[ x_1 \frac{\partial}{\partial x_1} F_{0,2}(z_1, z_i) - x_1 \frac{\partial}{\partial x_1} F_{0,2}(\z_1, z_i) \right] \left[ x_1 \frac{\partial}{\partial x_1}F_{g,n-1}(z_{S \setminus \{i\}}) \right]
\end{align*}

Now divide the resulting equation by $-2 x_1^2 (z_1 - \z_1)$ and apply $\frac{\partial}{\partial x_2} \frac{\partial}{\partial x_3} \cdots \frac{\partial}{\partial x_n} \big[ \, \cdot \, \big] ~ \dd x_1 ~ \dd x_2 ~ \cdots ~ \dd x_n$ to both sides. Using the definition of $\Omega_{g,n}$ from equation~(\ref{eq:omega}), we obtain the following.
\begin{align*}
\Omega_{g,n}(z_S) =&~ \frac{1}{(z_1-\z_1) \, \dd x_1} \Bigg[ \sum_{i=2}^n \frac{\partial}{\partial x_i} \left[ \frac{x_i}{x_1(x_1-x_i)} \, \dd x_1 \, \dd x_1 \, \Omega_{g,n-1}(z_{S \setminus \{1\}}) \right] + \Omega_{g-1,n+1}(z_1, \z_1, z_{S \setminus \{1\}}) \\
&+ \mathop{\sum_{g_1 + g_2 = g}}_{I \sqcup J = S \setminus \{1\}}^{\text{stable}} \Omega_{g_1, |I|+1}(z_1, z_I) ~ \Omega_{g_2, |J|+1}(\z_1, z_J) - \sum_{i=2}^n \Omega_{g,n-1}(z_{S \setminus \{i\}}) \left[ \Omega_{0,2}(z_1, z_i) - \Omega_{0,2}(\z_1, z_i) \right] \Bigg]
\end{align*}

Recall the definition of a principal part from equation~(\ref{eq:ppart}) and the fact that a rational meromorphic differential is equal to the sum of its principal parts. In our setting, $\Omega_{g,n}(z_S)$ is a meromorphic differential in $z_1$ with a pole only at $z_1 = 2$ and hence, is equal to its principal part there. Now take the principal part of both sides of the previous equation at $z_1 = 2$ and recall that the recursion kernel is given by $K(z_1, z) = \frac{\dd z_1}{z_1-z} \frac{1}{(z-\z) \, \dd x(z)}$.
\begin{align*}
\Omega_{g,n}(z_S) =&~ \mathop{\mathrm{Res}}_{z=2} \, K(z_1, z) \Bigg[ \sum_{i=2}^n \frac{\partial}{\partial x_i} \left[ \frac{x_i}{x(x-x_i)} \, \dd x \, \dd x \, \Omega_{g,n-1}(z_{S \setminus \{1\}}) \right] + \Omega_{g-1,n+1}(z, \z, z_{S \setminus \{1\}}) \\
&+ \mathop{\sum_{g_1 + g_2 = g}}_{I \sqcup J = S \setminus \{1\}}^{\text{stable}} \Omega_{g_1, |I|+1}(z, z_I) ~ \Omega_{g_2, |J|+1}(\z, z_J) - \sum_{i=2}^n \Omega_{g,n-1}(z_{S \setminus \{i\}}) \left[ \Omega_{0,2}(z, z_i) - \Omega_{0,2}(\z, z_i) \right] \Bigg] \\
=&~ \mathop{\mathrm{Res}}_{z = 2} \, K(z_1, z) \Bigg[ \Omega_{g-1,n+1}(z, \z, z_{S \setminus \{1\}}) + \mathop{\sum_{g_1 + g_2 = g}}_{I \sqcup J = S \setminus \{1\}}^{\text{stable}} \Omega_{g_1, |I|+1}(z, z_I) ~ \Omega_{g_2, |J|+1}(\z, z_J) \\
&- \sum_{i=2}^n \Omega_{g,n-1}(z, z_{S \setminus \{1, i\}}) \left[ \Omega_{0,2}(z, z_i) - \Omega_{0,2}(\z, z_i) \right] \Bigg]
\end{align*}
We have used here the fact that the first term on the right hand side of the first line is equal to zero, since the order two pole of $K(z_1, z)$ at $z = 2$ is removed by the appearance of $\dd x \, \dd x = \frac{(2-z)^2}{z^6} \, \dd z \, \dd z$.

Comparing this with equation~(\ref{eq:topreca}), we see that it suffices to show that for $i = 2, 3, \ldots, n$,
\[
\Omega_{g,n-1}(z, z_{S \setminus \{1, i\}}) \left[ \Omega_{0,2}(\z, z_i) - \Omega_{0,2}(z, z_i) \right] = \Omega_{g,n-1}(z, z_{S \setminus \{1, i\}}) \, \omega_{0,2}(\z, z_i) + \Omega_{g,n-1}(\z, z_{S \setminus \{1, i\}}) \, \omega_{0,2}(z, z_i).
\]
By Lemma~\ref{lem:sympart}, we have $\Omega_{g,n-1}(\z, z_{S \setminus \{1,i\}}) = -\Omega_{g,n-1}(z, z_{S \setminus \{1,i\}})$. The previous equation is then a consequence of the fact that
\[
\Omega_{0,2}(z, z_i) - \omega_{0,2}(z, z_i) = \Omega_{0,2}(\z, z_i) - \omega_{0,2}(\z, z_i).
\]
This is indeed true since both sides are equal to $\frac{\dd x_1 ~ \dd x_2}{(x_1-x_2)^2}$, and that completes the proof of Theorem~\ref{thm:toprec}.

\section{The quantum curve for monotone Hurwitz numbers} \label{sec:theorem2}

\subsection{The wave function}

Whereas Corollary~\ref{cor:fenergies} allows us to interpret the free energy $F_{g,n}(z_1, z_2, \ldots, z_n)$ as a rational function on the spectral curve, we interpret it in this section as the formal power series
\[
F_{g,n}(z_1, z_2, \ldots, z_n) = \sum_{\mu_1, \mu_2, \ldots, \mu_n = 1}^\infty \H_{g,n}(\mu_1, \mu_2, \ldots, \mu_n)~x_1^{\mu_1} x_2^{\mu_2} \cdots x_n^{\mu_n}.
\]
Substituting into the expression for the wave function in equation~(\ref{eq:pfunction}) yields the following.
\[
Z(x, \h) = \exp \left[ \sum_{g=0}^\infty \sum_{n=1}^\infty \frac{\h^{2g-2+n}}{n!} ~ \sum_{\mu_1, \mu_2 \ldots, \mu_n = 1}^\infty \H_{g,n}(\mu_1, \mu_2, \ldots, \mu_n) ~ x^{|\mmu|} \right]
\]
Given that the number $\H_{g,n}(\mmu)$ is non-zero only for $m = 2g-2+n+|\mmu| \geq 0$, it follows that we may interpret the wave function $Z(x, \h)$ as an element of $\mathbb{Q}[[x\h^{-1}, \h]]$.

\begin{proposition} \label{pro:zcoefficients}
Let $f(d, m)$ denote the number of monotone factorisations in $S_d$ with $m$ transpositions. The wave function satisfies
\[
Z(x, \hbar) = 1 + \sum_{d=1}^\infty \sum_{m=0}^\infty \dfrac{f(d, m)}{d!} \, x^d \hbar^{m-d}.
\]
\end{proposition}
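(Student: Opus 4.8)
The plan is to recognise $Z(x,\h)$ as the generating function counting \emph{all} (not necessarily transitive) monotone factorisations, and to obtain this from the exponent of \eqref{eq:pfunction} by the exponential formula, with transitive (connected) factorisations playing the role of the connected structures.

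First I would rewrite the exponent. Using the relation $m=2g-2+n+|\mmu|$ from Definition~\ref{def:mhurwitz}, the monomial attached to $\H_{g,n}(\mmu)$ in \eqref{eq:pfunction} is $\frac{\h^{2g-2+n}}{n!}x^{|\mmu|}=\frac{1}{n!}x^{d}\h^{m-d}$ with $d=|\mmu|$. Since $d!\,\H_{g,n}(\mmu)$ is by definition the number of connected monotone factorisations in $S_d$ whose product has its cycles \emph{labelled} with lengths $\mu_1,\dots,\mu_n$, summing over all length-$n$ tuples $\mmu$ with $|\mmu|=d$ counts each connected factorisation whose product has exactly $n$ cycles a total of $n!$ times, once per bijective labelling (this is robust even when cycle lengths repeat). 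Writing $C_n(d,m)$ for the number of connected monotone factorisations of length $m$ in $S_d$ whose product has $n$ cycles, we get $\frac{1}{n!}\sum_{\mmu}\H_{g,n}(\mmu)=C_n(d,m)/d!$. Summing over $g$ and $n$ (with $g$ fixed by $m=2g-2+n+d$) collapses the exponent of $Z$ to $\sum_{d,m}\frac{C(d,m)}{d!}\,x^d\h^{m-d}$, where $C(d,m)=\sum_n C_n(d,m)$ counts all connected monotone factorisations of length $m$ in $S_d$.

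Second, I would prove the key combinatorial decomposition: a monotone factorisation in $S_d$ is equivalent to a set partition of $\{1,\dots,d\}$ into the supports of its connected components, together with a connected monotone factorisation on each block. The forward map restricts to each component; since a block carries the order induced from $\{1,\dots,d\}$ and the restricted sequence is a subsequence of a monotone sequence, it is again monotone and connected. The inverse merge is the crux: given connected monotone factorisations on disjoint blocks, they reassemble into a \emph{unique} monotone factorisation, because every transposition in a block $B_j$ has its larger endpoint in $B_j$, so the $b$-values of transpositions lying in distinct blocks are automatically distinct. Sorting all transpositions by $b$-value, breaking the ties that occur only within a single block by that block's internal order, therefore determines the interleaving completely and yields exactly one monotone factorisation. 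Bijectivity of this correspondence, together with the multiplicativity $x^d\h^{m-d}=\prod_j x^{d_j}\h^{m_j-d_j}$ under $d=\sum_j d_j$, $m=\sum_j m_j$, gives
\[
\frac{f(d,m)}{d!}=\sum_{k}\frac{1}{k!}\sum_{\substack{d_1+\cdots+d_k=d,\ d_j\ge1\\ m_1+\cdots+m_k=m}}\prod_{j=1}^{k}\frac{C(d_j,m_j)}{d_j!}.
\]

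Finally, I would read this identity as the exponential formula: multiplying by $x^d\h^{m-d}$ and summing over $d\ge0$ and $m\ge0$ (with the $k=0$, $d=0$ term giving the constant $1$) produces $1+\sum_{d,m}\frac{f(d,m)}{d!}x^d\h^{m-d}=\exp\!\big(\sum_{d,m}\frac{C(d,m)}{d!}x^d\h^{m-d}\big)$. By the first step the exponent on the right is precisely the exponent in \eqref{eq:pfunction}, so the right-hand side is $Z(x,\h)$, which is the claim. The main obstacle is the uniqueness of the merge in the second step: monotonicity is a global constraint on the ordering of the $b_i$, and one must check it neither creates extra interleavings nor obstructs reassembly. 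The observation that distinct components occupy disjoint sets of $b$-values is exactly what makes the merge rigid, so that the exponential formula applies verbatim as in the classical case.
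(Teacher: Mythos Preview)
Your proposal is correct and follows essentially the same route as the paper: rewrite the exponent of $Z$ as the generating function for connected (transitive) monotone factorisations, then apply the exponential formula to pass to all monotone factorisations. Your justification of the uniqueness of the merge via the disjointness of $b$-values across blocks is in fact more explicit than the paper's one-line remark that transpositions from distinct components commute.
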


\begin{proof}
Let $f^\circ(d,m)$ denote the number of monotone factorisations
\[
\sigma_1 \circ \sigma_2 \circ \cdots \circ \sigma_m = \tau
\]
in $S_d$ with $m$ transpositions, such that $\sigma_1, \sigma_2, \ldots, \sigma_m$ generate a transitive subgroup of $S_d$. The definitions of the monotone Hurwitz numbers and the free energies allow us to write
\[
[x^d]~F_{g,n}(z, z, \ldots, z) = \frac{n!}{d!} \, f^\circ(d,2g-2+n+d).
\]

Therefore, we have the following expression for the wave function.
\begin{align*}
Z(x, \h) &= \exp \left[ \sum_{d=1}^\infty \sum_{m=0}^\infty \dfrac{f^\circ (d, m)}{d!} \, x^{d} \h^{m-d} \right] \\
&= 1 +\sum_{k=1}^\infty \dfrac{1}{k!}\left( \sum_{d=1}^\infty \sum_{m=0}^\infty \dfrac{f^\circ (d, m)}{d!} \, x^{d} \h^{m-d} \right)^k\\
&=1 + \sum_{d=1}^\infty \sum_{m=0}^\infty \left( \sum_{k=1}^\infty \dfrac{1}{k!} \mathop{\sum_{d_1+\cdots+d_k=d}}_{m_1+\cdots+m_k=m} \prod_{i=1}^k \dfrac{f^\circ (d_i,m_i)}{d_i!} \right) x^d \h^{m-d}
\end{align*}

It remains to show that 
\[
f(d,m) = d! \sum_{k=1}^\infty \dfrac{1}{k!} \mathop{\sum_{d_1+\cdots+d_k=d}}_{m_1+\cdots+m_k=m} \prod_{i=1}^k \dfrac{f^\circ (d_i,m_i)}{d_i!}.
\]

For a partition $d_1 + d_2 + \cdots + d_k = d$, there are $\dfrac{d!}{\prod d_i!}$ ways to write $\{1, 2, \ldots, d\}$ as the disjoint union of subsets $X_1, X_2, \ldots, X_k$, where $X_i$ has $d_i$ elements. There are $f^\circ (d_i,m_i)$ transitive monotone factorisations in the symmetric group $S_{X_i}$ with $m_i$ transpositions. Therefore,
\[
d! \, \prod_{i=1}^k \dfrac{f^\circ (d_i,m_i)}{d_i!}
\]
is the number of factorisations in $S_d$ with $m$ transpositions, whose terms can be separated into an ordered tuple of $k$ transitive monotone factorisations in $S_{X_1}, S_{X_2}, \ldots, S_{X_k}$ with $m_1, m_2, \ldots, m_k$ transpositions, respectively. To each such ordered tuple of $k$ factorisations, one recovers a unique monotone factorisation in $S_d$ with $m$ transpositions. Simply arrange the transpositions from all $k$ factorisations in monotone order, while maintaining their relative orders within the original factorisations. There exists a unique way to achieve this, since the transpositions from distinct factorisations commute with each other.

By performing summations over all positive integers $k$, all partitions of $d$ with $k$ parts, and all partitions of $m$ with $k$ parts, we obtain the number of monotone factorisations in $S_d$ with $m$ transpositions, namely $f(d, m)$.
\end{proof}

\begin{lemma}
The number $f(d,m)$ is equal to the Stirling number of the second kind $\stirling{d+m-1}{d-1}$ for $d \geq 1$ and $m \geq 0$.
\end{lemma}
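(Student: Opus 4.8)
The plan is to prove the identity by showing that $f(d,m)$ obeys the same recursion and boundary conditions as $\stirling{d+m-1}{d-1}$. Recall that a monotone factorisation in $S_d$ with $m$ transpositions is a tuple $(\sigma_1, \ldots, \sigma_m)$ with $\sigma_i = (a_i~b_i)$, $a_i < b_i$, and $b_1 \leq b_2 \leq \cdots \leq b_m$, with no constraint on the product or on transitivity. First I would classify such factorisations according to whether the largest letter $d$ occurs. Since the $b_i$ are weakly increasing and bounded above by $d$, and $d$ can appear only as some $b_i$, the letter $d$ appears among the transpositions if and only if $b_m = d$. If $d$ does not appear at all, the factorisation lives entirely in $S_{d-1}$, contributing $f(d-1,m)$. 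If $b_m = d$, then removing the final transposition $\sigma_m = (a_m~d)$ leaves a monotone factorisation of length $m-1$ in $S_d$, with exactly $d-1$ choices for $a_m \in \{1, \ldots, d-1\}$; conversely, appending $(a~d)$ to any monotone factorisation of length $m-1$ preserves monotonicity, since the new maximal letter is $d \geq b_{m-1}$. This exhibits a bijection and yields
\[
f(d,m) = f(d-1,m) + (d-1)\,f(d,m-1), \qquad d \geq 2,\ m \geq 1.
\]

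Next I would record the boundary data: $f(d,0) = 1$ for all $d$ (the empty factorisation) and $f(1,m) = 0$ for $m \geq 1$ (since $S_1$ contains no transpositions). These match $\stirling{d-1}{d-1} = 1$ and $\stirling{m}{0} = 0$ for $m \geq 1$. To finish, I would verify that $g(d,m) := \stirling{d+m-1}{d-1}$ satisfies the same recursion. Applying the standard recursion $\stirling{n}{k} = k\,\stirling{n-1}{k} + \stirling{n-1}{k-1}$ with $n = d+m-1$ and $k = d-1$ gives
\[
\stirling{d+m-1}{d-1} = (d-1)\,\stirling{d+m-2}{d-1} + \stirling{d+m-2}{d-2} = (d-1)\,g(d,m-1) + g(d-1,m),
\]
which is exactly the recursion for $f$. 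An induction on $d+m$, with the base cases above, then gives $f(d,m) = \stirling{d+m-1}{d-1}$ for all $d \geq 1$ and $m \geq 0$.

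The argument is essentially elementary, so there is no deep obstacle; the points requiring care are the index bookkeeping when matching the two recursions, and checking that the bijection in the case $b_m = d$ is genuinely onto, i.e. that every monotone factorisation with top letter $d$ arises by appending exactly one transposition $(a_m~d)$. As an alternative that ties into the Jucys--Murphy description recalled earlier in the paper, one can observe that the formal sum in $\mathbb{C}[S_d]$ of all monotone factorisations of length $m$ is precisely $h_m(J_2, \ldots, J_d)$, so applying the augmentation $\epsilon \colon \mathbb{C}[S_d] \to \mathbb{C}$ (a ring homomorphism sending each $J_k$ to its number of summands $k-1$) gives $f(d,m) = h_m(1, 2, \ldots, d-1)$; the conclusion then follows from the classical evaluation $h_m(1, 2, \ldots, n) = \stirling{n+m}{n}$, itself provable by the same recursive comparison or by the generating function $\prod_{k=1}^{n}(1-kt)^{-1}$.
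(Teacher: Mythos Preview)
Your proof is correct and follows essentially the same approach as the paper: both establish the recursion $f(d,m) = (d-1)\,f(d,m-1) + f(d-1,m)$ by splitting on whether the letter $d$ appears (equivalently, whether $b_m = d$), match this with the standard Stirling recursion under the substitution $n=d+m-1$, $k=d-1$, and conclude by induction from matching base cases. Your alternative via $f(d,m)=h_m(1,2,\ldots,d-1)$ through the augmentation applied to $h_m(J_2,\ldots,J_d)$ is a pleasant extra not in the paper.
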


\begin{proof}
We begin by showing that $f(d,m)$ satisfies the recursion
\begin{equation} \label{lem:frec}
f(d,m) = (d-1) \, f(d,m-1) + f(d-1,m).
\end{equation}
By definition, $f(d,m)$ is the number of monotone factorisations in $S_d$ with $m$ transpositions. Since there are $d-1$ transpositions in $S_d$ of the form $(a ~ d)$ with $a < d$, the number of factorisations that contain at least one such term is equal to $(d-1) \, f(d,m-1)$. To deduce the recursion above, one simply notes that there are $f(d-1,m)$ factorisations in $S_d$ with $m$ transpositions that do not contain a term of the form $(a ~ d)$. Note that one can make sense of the recursion in the case $(d,m) = (1,0)$ by defining $f(0,0) = 1$.

Now consider the recursion for the Stirling numbers of the second kind.
\[
\stirling{n+1}{k} = k \, \stirling{n}{k}+\stirling{n}{k-1}
\]
Writing $n = d+m-2$ and $k = d-1$, we obtain the equation
\begin{equation} \label{lem:stirling}
\stirling{d+m-1}{d-1} = (d-1) \, \stirling{d+m-2}{d-1}+\stirling{d+m-2}{d-2}.
\end{equation}
Compare equations~(\ref{lem:frec}) and ~(\ref{lem:stirling}) and check that the base cases $d = 1$ and $m = 0$ are in agreement to conclude by induction that $f(d,m) = \stirling{d+m-1}{d-1}$.
\end{proof}

\subsection{Proof of Theorem~\ref{thm:qcurve}}

In this section, we prove Theorem~\ref{thm:qcurve}, which states that the wave function $Z(x, \hbar)$ satisfies the following equation, where $\widehat{x} = x$ and $\widehat{y} = -\hbar \frac{\partial}{\partial x}$.
\[
[\widehat{x} \widehat{y}^2 + \widehat{y} + 1] \, Z(x, \hbar) = 0
\]

Take equation~(\ref{lem:frec}), multiply by $\dfrac{x^d\h^{m-d}}{(d-1)!}$ and sum over $d \geq 1$ and $m \geq 0$ to obtain
\[
\sum_{d=1}^\infty \sum_{m=0}^\infty d(d-1) \, f(d,m-1) \, \dfrac{x^d \h^{m-d}}{d!} - \sum_{d=1}^\infty \sum_{m=0}^\infty df(d,m) \, \dfrac{x^d \h^{m-d}}{d!} + \sum_{d=1}^\infty \sum_{m=0}^\infty f(d-1,m) \, \dfrac{x^d \h^{m-d}}{(d-1)!} =0.
\] 
We can then adjust the summation variables and use the operator $\frac{\partial}{\partial x}$ to write this as
\[
x^2 \h\dfrac{\partial^2}{\partial x^2}\sum_{d=1}^\infty \sum_{m=0}^\infty \frac{f(d,m)}{d!} \, x^d \h^{m-d} - x\dfrac{\partial}{\partial x} \sum_{d=1}^\infty \sum_{m=0}^\infty \frac{f(d,m)}{d!} \, x^d \h^{m-d} + \frac{x}{\h} \left[ 1 + \sum_{d=1}^\infty \sum_{m=0}^\infty \frac{f(d,m)}{d!} \, x^d \h^{m-d} \right] = 0.
\]
Multiplying through by $\frac{\h}{x}$ and invoking Proposition~\ref{pro:zcoefficients} gives
\[
\left[ x \h^2 \dfrac{\partial^2}{\partial x^2} - \h \dfrac{\partial}{\partial x} + 1 \right] Z(x,\h) = 0.
\]

After making the substitutions $\widehat{x} = x$ and $\widehat{y} = -\hbar \frac{\partial}{\partial x}$, one obtains Theorem~\ref{thm:qcurve}.

\section{Applications} \label{sec:applications}

\subsection{Asymptotic behaviour}

Eynard and Orantin show that applying the topological recursion to the Airy spectral curve
\[
x(z) = z^2 \qquad \text{and} \qquad y(z) = z
\]
produces intersection numbers on the moduli space $\overline{\mathcal M}_{g,n}$ of stable pointed curves~\cite{eyn-ora07a}. More precisely, the correlation differentials in that case satisfy
\[
\omega_{g,n}^{\text{Airy}}(z_1, z_2, \ldots, z_n) = \frac{(-1)^n}{2^{2g-2+n}} \sum_{|\mathbf{a}| = 3g-3+n} \int_{\overline{\mathcal M}_{g,n}} \psi_1^{a_1} \psi_2^{a_2} \cdots \psi_n^{a_n} \, \prod_{i=1}^n \frac{(2a_i+1)!! \, \dd z_i}{z_i^{2a_i+2}}.
\]
They furthermore observe that the topological recursion depends only on the local behaviour of the spectral curve at the zeroes of $\dd x$. Due to the assumption of simple zeroes, this behaviour can in turn be locally modelled on the Airy curve. The upshot is that the asymptotics of the correlation differentials for any spectral curve is closely related to that of the Airy spectral curve. In the case of the monotone Hurwitz numbers, this observation leads to the following result concerning the leading order terms of the polynomial $\P_{g,n}$.

\begin{proposition} \label{pro:coeffs}
If $|\mathbf{a}| = 3g-3+n$, then the coefficient of $\mu_1^{a_1} \mu_2^{a_2} \cdots \mu_n^{a_n}$ in $\P_{g,n}(\mu_1, \mu_2, \ldots, \mu_n)$ is
\[
2^{3g-3+n} \int_{\overline{\mathcal M}_{g,n}} \psi_1^{a_1} \psi_2^{a_2} \cdots \psi_n^{a_n}.
\]
\end{proposition}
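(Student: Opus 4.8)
The plan is to extract the top-degree coefficients $C_{g,n}(\mathbf{a})$ (those with $|\mathbf{a}|=3g-3+n$) from the \emph{most singular} part of the correlation differential $\omega_{g,n}$ at the branch point $z=2$, and to identify that most singular part with the Airy correlators via the local universality of the topological recursion observed by Eynard and Orantin. Theorem~\ref{thm:toprec} together with Corollary~\ref{cor:fenergies} gives the exact expression
\[
\omega_{g,n}(z_1,\ldots,z_n) = \sum_{a_1,\ldots,a_n=0}^{\text{finite}} C_{g,n}(\mathbf{a}) \prod_{i=1}^n \left[ \frac{\partial}{\partial x_i} f_{a_i}(z_i) \right] \dd x_i ,
\]
so the problem reduces to reading off $C_{g,n}(\mathbf{a})$ from an explicit rational multidifferential.

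First I would analyse the behaviour at $z_i=2$ of each factor. From equation~(\ref{eq:functions}) an easy induction shows that $f_a(z)$ has leading singular part $-2\cdot 2^a (2a-1)!!\,(z-2)^{-(2a+1)}$, whence $[\partial_{x_i} f_{a_i}(z_i)]\,\dd x_i$ has a pole of order exactly $2a_i+2$ at $z_i=2$ with leading term $2^{a_i+1}(2a_i+1)!!\,(z_i-2)^{-(2a_i+2)}\,\dd z_i$. The total pole order $\sum_i(2a_i+2)$ is therefore maximised precisely by terms with $|\mathbf{a}|=3g-3+n$. Since a term with exponent vector $\mathbf{a}'$ contributes to the coefficient of $\prod_i(z_i-2)^{-(2a_i+2)}$ only if $a_i'\geq a_i$ for each $i$, and $|\mathbf{a}'|\leq 3g-3+n$ by the degree bound in Proposition~\ref{pro:polynomiality}, these maximal terms are \emph{isolated}: the coefficient of $\prod_i(z_i-2)^{-(2a_i+2)}$ for $|\mathbf{a}|=3g-3+n$ equals $C_{g,n}(\mathbf{a})\prod_i 2^{a_i+1}(2a_i+1)!!$ with no contamination.

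Next I would pass to the local coordinate $\zeta=\tfrac12-\tfrac1z$, which satisfies $x-\tfrac14=-\zeta^2$, realises the sheet-exchange $\z=\frac{z}{z-1}$ as $\zeta\mapsto-\zeta$, and in which $\omega_{0,2}=\frac{\dd\zeta_1\,\dd\zeta_2}{(\zeta_1-\zeta_2)^2}$ is already in standard Airy form. The Eynard--Orantin principle that the recursion sees only the local data at the simple zero of $\dd x$ then identifies the most singular part of $\omega_{g,n}$ with a rescaled Airy correlator. Comparing recursion kernels, our local kernel is $\tfrac14$ times the Airy kernel, so the $2g-2+n$ kernel insertions building $\omega_{g,n}$ contribute $4^{-(2g-2+n)}$, while the fact that $z=2$ is a local \emph{maximum} rather than a minimum of $x$ (our $x-\tfrac14=-\zeta^2$ versus the Airy $x=\zeta^2$) contributes an extra sign $(-1)^n$; I would pin down both against the already-computed differentials $\omega_{0,3}$ and $\omega_{1,1}$.

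Finally I would equate the two descriptions of the most singular part. The Airy formula recalled above supplies $\frac{(-1)^n}{2^{2g-2+n}}\int_{\overline{\mathcal M}_{g,n}}\psi_1^{a_1}\cdots\psi_n^{a_n}$ times $\prod_i(2a_i+1)!!\,\zeta_i^{-(2a_i+2)}\dd\zeta_i$; matching this with the expression from the isolation step, the double factorials cancel, the two factors $(-1)^n$ cancel, and the remaining powers of two collapse to $C_{g,n}(\mathbf{a})=2^{3g-3+n}\int_{\overline{\mathcal M}_{g,n}}\psi_1^{a_1}\cdots\psi_n^{a_n}$. The main obstacle is the clean invocation of the local Airy universality together with the exact bookkeeping of the normalising constants and signs; the checks at $(0,3)$ and $(1,1)$, where $\P_{0,3}=1$ and $\P_{1,1}=\tfrac1{12}(\mu_1-1)$, validate the normalisation and in particular confirm that the two signs $(-1)^n$ genuinely cancel.
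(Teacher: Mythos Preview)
Your proposal follows the same route as the paper: both deduce the proposition from the Eynard--Orantin observation that the topological recursion depends only on the local behaviour of the spectral curve at the zero of $\dd x$, so that the most singular part of $\omega_{g,n}$ at $z=2$ is governed by the Airy correlators. The paper, however, gives no proof of this proposition beyond that one-paragraph sketch --- it simply states the result and points to the general theory --- whereas you supply the explicit bookkeeping (the leading behaviour of $f_a$ and $\dd f_a$ at $z=2$, the isolation argument using the degree bound from Proposition~\ref{pro:polynomiality}, the local coordinate $\zeta=\tfrac12-\tfrac1z$, and the matching of normalisation constants against the listed $\omega_{0,3}$ and $\omega_{1,1}$). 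So your argument is a genuine elaboration of what the paper leaves implicit, and the approach is sound.
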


Eynard furthermore proposes an interpretation for the lower order behaviour of the correlation differentials in terms of the intersection theory on a moduli space of ``coloured'' stable pointed curves~\cite{eyn11b}. In the present case of a spectral curve with one branch point, the analysis involves only intersection numbers on the usual moduli space $\overline{\mathcal M}_{g,n}$ of stable pointed curves. It would be interesting to determine whether the relation is natural from the algebro-geometric viewpoint, since such a result would constitute a monotone analogue of the ELSV formula~\cite{eke-lan-sha-vai}.

\subsection{String and dilaton equations}

The correlation differentials produced by the topological recursion satisfy \emph{string equations}~\cite{eyn-ora07a}.
\begin{align*}
\sum_\alpha \mathop{\text{Res}}_{z=\alpha} \, y(z) \, \omega_{g,n+1}(z, z_S) &= - \sum_{i=1}^n \dd z_i \, \frac{\partial}{\partial z_i} \left( \frac{\omega_{g,n}(z_S)}{\dd x(z_i)} \right) \\
\sum_\alpha \mathop{\text{Res}}_{z=\alpha} \, x(z) y(z) \, \omega_{g,n+1}(z, z_S) &= - \sum_{i=1}^n \dd z_i \, \frac{\partial}{\partial z_i} \left( \frac{x(z_i) \, \omega_{g,n}(z_S)}{\dd x(z_i)} \right)
\end{align*}
They are also known to satisfy the following \emph{dilaton equation}, where $\dd \Phi(z) = y(z) \, \dd x(z)$.
\begin{equation} \label{eq:dilaton}
\sum_\alpha \mathop{\text{Res}}_{z=\alpha} \, \Phi(z) \, \omega_{g,n+1}(z, z_S) = (2g-2+n) \, \omega_{g,n}(z_S)
\end{equation}

\begin{proposition}[String and dilaton equations for monotone Hurwitz numbers] \label{pro:dilaton}
\begin{align*}
\P_{g,n+1}(-\tfrac{1}{2}, \mmu_S) &= 2|\mmu_S| \, \P_{g,n}(\mmu_S) \\
\P_{g,n+1}(0, \mmu_S) - \P_{g,n+1}(-\tfrac{1}{2}, \mmu_S) &= (2g-2+n) \, \P_{g,n}(\mmu_S)
\end{align*}
\end{proposition}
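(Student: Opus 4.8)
The plan is to derive the two identities directly from the general string and dilaton equations quoted above, applied to the spectral curve $x(z) = \frac{z-1}{z^2}$, $y(z) = -z$. The first identity will come from the second string equation (the one weighted by $x(z)y(z)$) and the second from the dilaton equation. The starting point is the observation that $\frac{\partial}{\partial x_i} F_{g,n} \, \dd x_i = \frac{\partial}{\partial z_i} F_{g,n} \, \dd z_i$, so that Theorem~\ref{thm:toprec} lets us write $\omega_{g,n} = \left( \prod_i \frac{\partial}{\partial z_i} \right) F_{g,n} \prod_i \dd z_i$ for $(g,n) \neq (0,2)$. Combined with Corollary~\ref{cor:fenergies}, the dependence of $\omega_{g,n+1}$ on its first variable $z$ is packaged entirely through the derivatives $f_{a}'(z)$ of the auxiliary functions of equation~\eqref{eq:functions}, whose coefficients $C_{g,n+1}(a, \ldots)$ record $\P_{g,n+1}$.

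The heart of the argument is a single-variable residue computation at the unique branch point $z = 2$. On this curve $\dd x = \frac{2-z}{z^3}\,\dd z$, one has $x(z)y(z) = -1 + \frac{1}{z}$, and integrating $\dd \Phi = y\,\dd x$ gives $\Phi = \log z + \frac{2}{z}$. I would evaluate $\mathop{\mathrm{Res}}_{z=2}\big[ x(z)y(z)\, f_a'(z)\,\dd z \big]$ and $\mathop{\mathrm{Res}}_{z=2}\big[ \Phi(z)\, f_a'(z)\,\dd z \big]$ by transporting the residue via the residue theorem to the remaining poles $z = 0, \infty$ (where $x = \infty, 0$). There the functions $f_a$ are controlled: Lemma~\ref{lem:sympart} and the involution $\z = \tfrac{z}{z-1}$ give $f_a(0) = -1$ for $a=0$ and $0$ otherwise, together with the behaviour of $f_a$ at $\infty$, while the recursion $f_a = -\tfrac{z(z-1)}{z-2} f_{a-1}'$ yields $f_a'(0) = \tfrac{1}{2}\left( -\tfrac{1}{2} \right)^a$. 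For the logarithmic part of $\Phi$ I would integrate by parts, $\log z \, f_a' \, \dd z = \dd(\log z \, f_a) - \tfrac{f_a}{z}\,\dd z$, which removes the multivaluedness and reduces its contribution to $-\mathop{\mathrm{Res}}_{z=2} \tfrac{f_a}{z}\,\dd z$. The outcome is
\[
\mathop{\mathrm{Res}}_{z=2} \big[ x(z)y(z)\, f_a'(z)\, \dd z \big] = -\tfrac{1}{2}\left( -\tfrac{1}{2} \right)^a \qquad\text{and}\qquad \mathop{\mathrm{Res}}_{z=2} \big[ \Phi(z)\, f_a'(z)\, \dd z \big] = 0^a - \left( -\tfrac{1}{2} \right)^a,
\]
where $0^a$ denotes $1$ if $a = 0$ and $0$ otherwise.

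Summing these against $C_{g,n+1}(a, \ldots)$ amounts to substituting $\mu \mapsto -\tfrac{1}{2}$ in the first slot (string case), and to the difference of the substitutions $\mu \mapsto 0$ and $\mu \mapsto -\tfrac{1}{2}$ (dilaton case); that is, the left-hand sides are the multidifferentials built respectively from the polynomials $-\tfrac{1}{2}\,\P_{g,n+1}(-\tfrac{1}{2}, \cdot)$ and $\P_{g,n+1}(0, \cdot) - \P_{g,n+1}(-\tfrac{1}{2}, \cdot)$. I would then expand both sides at $x_i = 0$. Computing $\frac{\partial}{\partial x_i}\big( x_i \prod_j \frac{\partial}{\partial x_j} F_{g,n} \big)$ shows the right-hand side of the $xy$-string equation equals $-\sum_{\mmu} |\mmu|\, \H_{g,n}(\mmu) \prod_i \mu_i x_i^{\mu_i - 1} \dd x_i$, while the dilaton right-hand side is simply $(2g-2+n)\,\omega_{g,n}$. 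Matching the coefficient of $\prod_i \binom{2\mu_i}{\mu_i} \mu_i x_i^{\mu_i - 1} \dd x_i$ and invoking the polynomiality factorisation $\H_{g,n}(\mmu) = \prod_i \binom{2\mu_i}{\mu_i}\,\P_{g,n}(\mmu)$ of Proposition~\ref{pro:polynomiality} then cancels the common factors and produces exactly the two stated identities.

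The main obstacle is the dilaton residue computation: one must handle the multivaluedness of $\Phi = \log z + \frac{2}{z}$, and the crucial insight is that its two summands conspire to yield precisely the difference of the evaluations at $\mu = 0$ and $\mu = -\tfrac{1}{2}$ — the $\frac{2}{z}$ term contributing $-(-\tfrac{1}{2})^a$ through $f_a'(0)$, and the $\log z$ term contributing $0^a$ through $f_a(0)$ after integration by parts. A secondary point is bookkeeping: all of these manipulations use $\omega_{g,n+1} = \Omega_{g,n+1}$ and the Corollary~\ref{cor:fenergies} form of the free energies, so the unstable cases and the exceptional $(0,2)$ term lie outside the argument and would be confirmed separately against the explicit table of $\P_{g,n}$.
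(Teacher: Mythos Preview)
Your approach is correct and reaches the same endpoint as the paper, but the residue computation proceeds along a genuinely different route. The paper integrates by parts once globally, replacing $\mathop{\mathrm{Res}}_{z=2}\Phi\,\omega_{g,n+1}$ by $-\mathop{\mathrm{Res}}_{z=2}\dd\Phi\cdot\int\omega_{g,n+1}$; since $\dd\Phi=\frac{z-2}{z^2}\,\dd z$, this feeds directly into Lemma~\ref{lem:residue}, which is an \emph{inductive} computation of $\mathop{\mathrm{Res}}_{z=2}\frac{f_a(z)}{z^2(z-2)^d}\,\dd z$ carried out locally at the branch point. You instead keep $\omega_{g,n+1}$ written in terms of $\dd f_a$, split $\Phi=\log z+\tfrac{2}{z}$ explicitly, and use the global residue theorem to transport each residue to $z=0$ and $z=\infty$, where the recursion $f_a=-\tfrac{z(z-1)}{z-2}f_{a-1}'$ yields $f_a(0)$ and $f_a'(0)$ immediately. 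Your method bypasses Lemma~\ref{lem:residue} entirely and makes the appearance of the evaluation point $-\tfrac12$ transparent (it is $\tfrac{d}{dz}\big[-\tfrac{z(z-1)}{z-2}\big]\big|_{z=0}$), at the cost of having to track the behaviour of $f_a$ at $\infty$ as well as at $0$. The paper's method is more local and self-contained, but requires proving the auxiliary lemma by induction.

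One small imprecision to tighten: for the $\log z$ piece you write that the contribution $0^a$ comes ``through $f_a(0)$,'' but for $a=0$ one has $f_0(0)=-1$; the value $1$ actually comes from the sum $f_0(0)+\mathop{\mathrm{Res}}_{z=\infty}\frac{f_0}{z}\,\dd z = -1+2$, since $f_0(z)\to -2$ at infinity. For $a\ge 1$ the $\infty$-residue of $f_a/z$ vanishes because $f_a(z)\sim -2/z$, so $f_a(0)=0$ does suffice there. This does not affect your final answer.
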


Observe here that the evaluations of $\P_{g,n+1}$ at 0 and $-\tfrac{1}{2}$ are well-defined, since $\P_{g,n+1}$ is a polynomial. The proofs for the two statements above are similar in nature, so we discuss the dilaton equation only. For this, we require the following residue calculation.

\begin{lemma} \label{lem:residue}
For integers $a \geq 1$ and $d \geq -1$,
\[
\mathop{\mathrm{Res}}_{z=2} \, \frac{f_a(z)}{z^2 (z-2)^d} \, \dd z = ( - \tfrac{1}{2} )^{a+d+1}.
\]
\end{lemma}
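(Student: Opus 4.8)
The plan is to move the whole computation to a coordinate in which both the function $f_a$ and the operator generating it take their simplest possible form. Recall from equation~(\ref{eq:functions}) that $f_a = \left(x\frac{\partial}{\partial x}\right)^a f_0$ with $f_0 = \sum_{\mu}\binom{2\mu}{\mu}x^\mu = (1-4x)^{-1/2}-1$. Since $1-4x = \frac{(z-2)^2}{z^2}$ on the spectral curve, I would introduce the coordinate $s = \frac{2-z}{z}$ (equivalently $s = \sqrt{1-4x}$, or $z = \frac{2}{1+s}$), which is a local biholomorphism sending the branch point $z=2$ to $s=0$. A short computation then shows that in this coordinate $f_0$ becomes $\phi_0(s) := s^{-1}-1$ and the operator $x\frac{\partial}{\partial x}$ becomes $D := \frac{s^2-1}{2s}\frac{\partial}{\partial s}$, so that $f_a = \phi_a(s)$ where $\phi_a := D^a\phi_0$.

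First I would rewrite the residue in the $s$-coordinate. Using $z = \frac{2}{1+s}$ one finds $\frac{\dd z}{z^2(z-2)^d} = \frac{(-1)^{d+1}}{2^{d+1}}\frac{(1+s)^d}{s^d}\,\dd s$, and hence
\[
\mathop{\mathrm{Res}}_{z=2}\frac{f_a(z)}{z^2(z-2)^d}\,\dd z = \frac{(-1)^{d+1}}{2^{d+1}}\mathop{\mathrm{Res}}_{s=0}\frac{(1+s)^d}{s^d}\,\phi_a(s)\,\dd s.
\]
It therefore suffices to prove that the residue on the right equals $(-\tfrac12)^a$ for all $a\geq1$ and $d\geq-1$, since then the prefactor produces exactly $(-\tfrac12)^{a+d+1}$.

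Next I would analyse $\phi_a$. Because $D$ sends $s^k$ to $\frac{k}{2}(s^k-s^{k-2})$, it preserves the parity of exponents and never raises them; as $\phi_0 = s^{-1}-1$ and $D$ annihilates the constant term, it follows by induction that for $a\geq1$ the function $\phi_a$ is a Laurent polynomial supported on the odd negative powers $s^{-1},s^{-3},\dots,s^{-(2a+1)}$. Writing $\phi_a = \sum_{j\geq0}c_{a,j}\,s^{-(2j+1)}$ and using $\mathop{\mathrm{Res}}_{s=0}\frac{(1+s)^d}{s^d}s^{-(2j+1)}\dd s = \binom{d}{d+2j}$, I would split into two cases. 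For $d\geq0$ every term with $j\geq1$ vanishes, since $(1+s)^d$ is a polynomial of degree $d$, leaving exactly $c_{a,0}$. For $d=-1$ the surviving terms give $-\sum_{j\geq1}c_{a,j} = c_{a,0}-\phi_a(1)$. The two remaining inputs are that $\phi_a(1)=0$ for $a\geq1$—immediate because the factor $\frac{s^2-1}{2s}$ in $D$ vanishes at $s=1$, so every $\phi_a=D\phi_{a-1}$ does too—and that $c_{a,0}=(-\tfrac12)^a$, which I would obtain from $c_{a,0}=\mathop{\mathrm{Res}}_{s=0}\phi_a\,\dd s$ by integrating by parts to get the recursion $c_{a,0}=-\tfrac12 c_{a-1,0}$ with $c_{0,0}=1$. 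Both cases then yield $(-\tfrac12)^a$, completing the proof.

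I expect the main obstacle to be locating this change of variables. A naive integration by parts performed directly in $z$ keeps generating residues of new shapes—with assorted powers of $z$ and $z-1$ in the numerator—and never closes up, while the tempting exponential generating function $\sum_a \phi_a\frac{t^a}{a!} = (1-e^t+e^ts^2)^{-1/2}-1$ cannot be used term-by-term, because the pole order of $\phi_a$ at $s=0$ grows with $a$ and residue extraction fails to commute with the sum over $a$ (indeed the fixed-$t$ residue at $s=0$ is identically zero, contradicting the individual values). The coordinate $s=\sqrt{1-4x}$ is precisely what simultaneously linearises the recursion, turning $x\frac{\partial}{\partial x}$ into the triangular operator $s^k\mapsto\frac{k}{2}(s^k-s^{k-2})$, and exposes the two clean facts $\phi_a(1)=0$ and $c_{a,0}=(-\tfrac12)^a$ that drive the result.
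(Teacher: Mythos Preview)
Your proof is correct and takes a genuinely different route from the paper's.

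The paper argues by induction on $a$ entirely in the $z$-coordinate: writing $R_{a,d}$ for the residue, it computes the base case $R_{1,d}=(-\tfrac12)^{d+2}$ by hand, then uses $f_{a+1}=-\tfrac{z(z-1)}{z-2}\tfrac{\partial}{\partial z}f_a$ and integration by parts to obtain the three-term relation
\[
R_{a+1,d}=-(d+1)R_{a,d}-(3d+2)R_{a,d+1}-(2d+2)R_{a,d+2},
\]
and checks algebraically that $(-\tfrac12)^{a+d+1}$ satisfies it. Your argument instead passes to the coordinate $s=(2-z)/z=\sqrt{1-4x}$, in which the generating operator becomes the triangular map $s^k\mapsto\tfrac{k}{2}(s^k-s^{k-2})$ and $\phi_a$ is a Laurent polynomial in odd negative powers of $s$; the residue for $d\geq0$ then collapses to the single coefficient $c_{a,0}$, governed by the one-term recursion $c_{a,0}=-\tfrac12 c_{a-1,0}$, with the case $d=-1$ handled by the transparent vanishing $\phi_a(1)=0$. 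The paper's proof is shorter and purely mechanical, requiring no insight beyond integration by parts. Your approach costs a coordinate change but explains \emph{why} the answer factorises as $(-\tfrac12)^{d+1}\cdot(-\tfrac12)^{a}$: the $d$-dependence sits entirely in the Jacobian $\tfrac{\dd z}{z^2(z-2)^d}=\tfrac{(-1)^{d+1}}{2^{d+1}}\tfrac{(1+s)^d}{s^d}\,\dd s$, while the $a$-dependence reflects that the $s^{-1}$-mode is an eigenvector of $D$ modulo lower-order terms.
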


\begin{proof}
Let the residue on the left hand side of the equation above be denoted by $R_{a,d}$. We will prove the lemma by induction on $a$. The base case $a = 1$ holds due to the following direct computation.
\[
R_{1,d} = \mathop{\mathrm{Res}}_{z=2} \, \frac{f_1(z)}{z^2 (z-2)^d} \, \dd z
= \mathop{\mathrm{Res}}_{z=2} \, \frac{-2(z-1)}{z (z-2)^{d+3}} \, \dd z
= \mathop{\mathrm{Res}}_{z=2} \, \left[ -2 + \sum_{k=0}^\infty (-\tfrac{1}{2})^k (z-2)^{k-d-3} \right] \dd z \\
= (-\tfrac{1}{2})^{d+2}
\]

One can deduce the lemma for $R_{a+1,d}$ from the statement for $R_{a,d}$, $R_{a,d+1}$ and $R_{a,d+2}$ by the following computation.
\begin{align*}
R_{a+1,d} &= \mathop{\mathrm{Res}}_{z=2} \, \frac{f_{a+1}(z)}{z^2 (z-2)^d} \, \dd z \\
&= \mathop{\mathrm{Res}}_{z=2} \, \dd f_a(z) \, \frac{-z(z-1)}{z^2 (z-2)^{d+1}} \\
&= \mathop{\mathrm{Res}}_{z=2} \, f_a(z) \, \dd \left[ \frac{z(z-1)}{z^2 (z-2)^{d+1}} \right] \\
&= - \mathop{\mathrm{Res}}_{z=2} \, f_a(z) \, \frac{(d+1)(z-2)^2 + (3d+2) (z-2) + (2d+2)}{z^2 (z-2)^{d+2}} \, \dd z \\
&= - (d+1) R_{a,d} - (3d+2) R_{a,d+1} - (2d+2) R_{a,d+2} \\
&= (-\tfrac{1}{2})^{a+d+2}
\end{align*}
The second equality uses the inductive definition of $f_a(z)$ from equation~(\ref{eq:functions}), the third equality uses the fact that $\mathop{\mathrm{Res}} \, \dd (f \, g) = \mathop{\mathrm{Res}} \, (f \, \dd g + g \, \dd f) = 0$, and the final equality uses the inductive hypothesis. Therefore, the desired result follows by induction on $a$.
\end{proof}

\begin{proof}[Proof of Proposition~\ref{pro:dilaton}.]
Consider the left hand side of equation~(\ref{eq:dilaton}) with $\dd \Phi(z) = y(z) \, \dd x(z) = \frac{z-2}{z^2} \, \dd z$.
\begin{align*}
LHS &= \mathop{\mathrm{Res}}_{z=2} \, \Phi(z) \, \omega_{g,n+1}(z, z_S) \\
&= - \mathop{\mathrm{Res}}_{z=2} \, \dd\Phi(z) \int \omega_{g,n+1}(z, z_S) \\
&= - \mathop{\mathrm{Res}}_{z=2} \, \dd\Phi(z) \sum_{a, a_1, a_2, \ldots, a_n = 0}^\mathrm{finite} C_{g,n+1}(a, a_1, a_2, \ldots, a_n) \, f_a(z) \, \prod_{i=1}^n \dd f_{a_i}(z_i)
 \\
&= - \sum_{a, a_1, a_2, \ldots, a_n = 0}^\mathrm{finite} C_{g,n+1}(a, a_1, a_2, \ldots, a_n) \, \prod_{i=1}^n \dd f_{a_i}(z_i) \, \mathop{\mathrm{Res}}_{z=2} \, f_a(z) \frac{z-2}{z^2} \, \dd z \\
&= \sum_{a_1, a_2, \ldots, a_n = 0}^\mathrm{finite} \left[ C_{g,n+1}(0, a_1, a_2, \ldots, a_n) - \sum_{a=0}^{\text{finite}}C_{g,n+1}(a, a_1, a_2, \ldots, a_n) (-\tfrac{1}{2})^a \right] \, \prod_{i=1}^n \dd f_{a_i}(z_i)
\end{align*}
The second equality uses the fact that $\mathop{\mathrm{Res}} \, \dd (f \, g) = \mathop{\mathrm{Res}} \, (f \, \dd g + g \, \dd f) = 0$, while the third equality uses Theorem~\ref{thm:toprec} and Corollary~\ref{cor:fenergies}. The last equality uses Lemma~\ref{lem:residue} in the case $d = -1$ and the direct computation
\[
\mathop{\mathrm{Res}}_{z=2} \, f_0(z) \, \frac{z-2}{z^2} \, \dd z = 0.
\]

Now consider the right hand side of equation~(\ref{eq:dilaton}).
\begin{align*}
RHS &= (2g-2+n) \, \omega_{g,n}(z_S) \\
&= (2g-2+n) \sum_{a_1, a_2, \ldots, a_n = 0}^\mathrm{finite} C_{g,n}(a_1, a_2, \ldots, a_n) \prod_{i=1}^n \dd f_{a_i}(z_i)
\end{align*}

One may equate the coefficients of $\prod \dd f_{a_i}(z_i)$ on both sides of the dilaton equation to obtain the following.
\[
C_{g,n+1}(0, a_1, a_2, \ldots, a_n) - \sum_{a=0}^{\text{finite}} C_{g,n+1}(a, a_1, a_2, \ldots, a_n) (-\tfrac{1}{2})^a = (2g-2+n) \, C_{g,n}(a_1, a_2, \ldots, a_n)
\]
Multiply both sides by $\mu_1^{a_1} \mu_2^{a_2} \cdots \mu_n^{a_n}$ and sum over $\mu_1, \mu_2, \ldots, \mu_n$ to obtain the desired result.
\end{proof}

\bibliographystyle{habbrv}
\bibliography{monotone-hurwitz}

\textsc{School of Mathematical Sciences, Monash University, VIC 3800, Australia} \\
\emph{Email:} \href{mailto:norm.do@monash.edu}{norm.do@monash.edu}

\textsc{School of Mathematical Sciences, Monash University, VIC 3800, Australia} \\
\emph{Email:} \href{mailto:alastair.dyer@gmail.com}{alastair.dyer@gmail.com}

\textsc{School of Mathematical Sciences, Monash University, VIC 3800, Australia} \\
\emph{Email:} \href{mailto:daniel.mathews@monash.edu}{daniel.mathews@monash.edu}

\end{document}